\newtheorem{thm}{Theorem}[section]
\newtheorem*{thm*}{Theorem}
\newtheorem{prop}[thm]{Proposition}
\newtheorem{lem}[thm]{Lemma}
\newtheorem{cor}[thm]{Corollary}
\theoremstyle{remark}
\newtheorem*{rmk}{Remark}
\newcommand{\IP}[2]{\left<#1,#2\right>}
\newcommand{\BG}{\mathbb{G}}
\newcommand{\SP}{\mathcal{P}}
\newcommand{\R}{\mathbb{R}}
\newcommand{\N}{\mathbb{N}}
\newcommand{\SK}{\mathcal{K}}
\renewcommand{\S}{\mathbb{S}}
\newcommand{\vn}[1]{\lVert#1\rVert}
\newcommand{\kav}{\overline{k}}
\begin{document}

\title{Closed ideal planar curves}
\author[B. Andrews, J. McCoy, G. Wheeler, V.-M. Wheeler]{Ben Andrews\and James McCoy\and Glen Wheeler\and Valentina-Mira Wheeler*}

\thanks{Financial support from Discovery Project DP150100375 of the Australian Research Council is gratefully acknowledged.\\ \hspace*{0.8em} *: Corresponding author.}
\address{Applied and Nonlinear Analysis Group, Mathematical Sciences Institute, College of Science, Australian National University, Canberra, ACT, Australia}
\email{ben.andrews@anu.edu.au}
\address{Priority research centre for Computer-Assisted Research Mathematics
and its Applications, School of Mathematical \& Physical Sciences, Mathematics
Building - V122, University of Newcastle, University Drive, Callaghan NSW 2308
Australia}
\email{james.mccoy@newcastle.edu.au}
\address{Institute for Mathematics and its Applications, School of Mathematics and Applied Statistics, University of Wollongong, Northfields Ave, Wollongong, NSW 2500, Australia}
\email{glenw@uow.edu.au\text{ and }vwheeler@uow.edu.au}
\subjclass[2000]{53C44 \and 58J35} 

\begin{abstract}

In this paper we use a gradient flow to deform closed planar curves to curves
with least variation of geodesic curvature in the $L^2$ sense.
Given a smooth initial curve we show that the solution to the flow exists for
all time and, provided the length of the evolving
curve remains bounded, smoothly converges to a multiply-covered
circle. Moreover, we show that curves in any homotopy class with
initially small $L^3\vn{k_s}_2^2$ enjoy a uniform length bound under the
flow, yielding the convergence result in these cases.
\end{abstract}
\maketitle

\section{Overview}

Let us define $E[\gamma]$ for a smooth closed planar curve $\gamma:\S\rightarrow\R^2$
by
\[
	E[\gamma] = \frac12\int_\gamma k_s^2\,ds\,.
\]
Here, $k_s$ is the first arclength derivative of curvature.
We note that applications of the energy $E$ appear in computer aided design \cite{application}.\footnote{We thank Yann Bernard for providing us with this reference.}
Critical points for $E$ are termed \emph{planar ideal curves}.
These are the model one-dimensional case of \emph{ideal submanifolds}, and
their study is a preliminary proof-of-concept step in a larger program.
We explain this broader perspective in Section \ref{Sbroad}, before moving into
the details of the case we study here in Section \ref{Splanar}.

Our primary interest is in the $L^2$-gradient flow for the functional $E$.
In Section \ref{Splanar} we calculate the first variation of $E$, giving the resultant Euler-Lagrange operator and gradient
flow.
Next, Section \ref{Sshort} describes local existence for the flow.
This is done by a standard method.
Section \ref{Srigid} concerns the equilibrium set for the flow, proving that it
consists only of multiply-covered circles (we call these $\omega$-circles) and
their rigid images.
We move on to the analysis of the global behaviour of the flow in Section
\ref{Sglobal}, which identifies \emph{length} as the quantity that influences
asymptotic behaviour of the flow.
The main result of Section \ref{Sglobal} is global existence for arbitrary
smooth initial data.
This behaviour is reminiscent of the (unconstrained) elastic flow \cite{DKS},
and is in stark contrast with the curve diffusion flow \cite{Wcdf} and its
higher-order relatives \cite{WP}.

In Section \ref{Sstability}, we study the behaviour of the flow under a condition on the scale-invariant counterpart of $E$, namely (here $L$ is used to denote the length of $\gamma$)
\[
(L^3E)[\gamma]\,.
\]
While the flow exists for all time regardless of the initial energy,
convergence is not straightforward.
Since the rigidity result of Section \ref{Srigid} classifies all equilibria as
circles, clearly initial data in the regular homotopy class of a lemniscate can
not converge.
The key issue is in obtaining uniform upper and lower bounds for the length of the evolving family of curves.
When length is straightforward to control, such as in the case of evolving
families of curves with free boundary on parallel lines \cite{para1}\footnote{See \cite{para2} for the published conference paper.},
convergence can be obtained by a more standard, direct argument.
In Section \ref{Sstability} we prove that smallness of $(L^3E)[\gamma_0]$
allows us to estimate the scale-invariant energy by the square of the
Euler-Lagrange operator in $L^2$.
This allows us to first prove monotonicity of the scale-invariant energy, and
eventually obtain uniform upper and lower bounds on length.

When full convergence of a gradient flow is difficult, adaptation of the classical \L ojasiewicz-Simon
gradient inequality is a powerful strategy.
Relying on an observation due to Chill \cite{Chill}, this was successfully completed in \cite{DPS} for the elastic flow.
Here, we are able to avoid adapting this framework through further careful
analysis of the Euler-Lagrange operator.
In particular, we show that if the energy is small, it must decay exponentially fast.
This is then enough to enact a standard argument to obtain full convergence of
the flow.
Apart from yielding exponential convergence, this further improves the
resultant convergence statement by removing the need for a family of
reparametrisations (compare with the convergence results in \cite{DPS}).  This
is reminiscent of, for example, Huisken's original convergence result for the
mean curvature flow \cite{huisken}.  This argument concludes Section
\ref{Sstability}.

We summarise the main results, Theorem \ref{TMglobal} and Theorem \ref{TMmain}, in the following statement.

\begin{thm}
\label{TMoverview}
Let $\gamma:\S\times[0,T)\rightarrow\R^2$ be the steepest descent $L^2$-gradient flow for the functional
\[
E[\gamma] = \frac12\int_\gamma k_s^2\,ds
\,,
\]
where $\gamma(\cdot,0) = \gamma_0(\cdot)$ is smooth
and $T$ is maximal, $T\in(0,\infty]$.
Then:
\begin{enumerate}
\item[(a)] The maximal time of existence is infinite ($T=\infty$); and
\item[(b)] If the length $L[\gamma_t]$ is uniformly bounded along the flow, then
$\gamma$ converges exponentially fast in the $C^\infty$-topology to a standard round
$\omega$-circle, where $\omega = \frac1{2\pi}\int_{\gamma_0}k_0\,ds_0$.
\end{enumerate}
Furthermore, there exists a universal constant $\varepsilon_2>0$ such that if
$\gamma_0:\S\rightarrow\R^2$ satisfies 
\begin{equation*}
	(L^3E)[\gamma_0] < \varepsilon_2\,,
\end{equation*}
then length is uniformly bounded along the flow, and the convergence statement from (b) above holds.
\end{thm}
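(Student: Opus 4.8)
The backbone of the argument is the dissipation identity from the first variation computed in Section~\ref{Splanar}: writing $\nabla E$ for the $L^2$-gradient (the Euler--Lagrange operator), the flow moves in the normal direction with scalar speed $-\nabla E$, so that $\frac{d}{dt}E[\gamma_t]=-\vnn[2]{\nabla E}^2\le 0$. In particular $E[\gamma_t]\le E[\gamma_0]$ and $\int_0^T\vnn[2]{\nabla E}^2\,dt\le E[\gamma_0]$ for every $T$. To prove (a) I would show the flow cannot become singular in finite time. Fix $T<\infty$. Since $\absolute{\frac{d}{dt}L}\le\vn{k}_2\,\vnn[2]{\nabla E}$, and since splitting $k=\kav+(k-\kav)$ with $\kav=\frac{2\pi\omega}{L}$ and using the Poincar\'e inequality $\vn{k-\kav}_2^2\le\frac{L^2}{4\pi^2}\vn{k_s}_2^2$ bounds $\vn{k}_2^2$ by a function of $\omega$, $L$ and $E[\gamma_0]$, one obtains a differential inequality for $\log L$ (and for a suitable power of $L$ near collapse) whose right-hand side is $\vnn[2]{\nabla E}$ times a locally bounded factor. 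As $\vnn[2]{\nabla E}\in L^2([0,T))$ and $T<\infty$, this keeps $L$ bounded above and below on $[0,T)$. With the length pinned, the integral estimates of Dziuk--Kuwert--Sch\"atzle type---differentiate $\int k_s^2\,ds$ and its higher analogues and absorb the top-order terms by interpolation---yield uniform bounds on every $\vn{\partial_s^m k}_2$, hence on every $\vn{\partial_s^m k}_\infty$, on $[0,T)$. These are exactly the hypotheses of the continuation criterion accompanying the local existence theory of Section~\ref{Sshort}, so the flow extends past $T$, contradicting maximality; thus $T=\infty$.

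For (b), assume $L[\gamma_t]\in[L_-,L_+]$ with $L_->0$. The same integral estimates now return \emph{time-uniform} bounds on all $\vn{\partial_s^m k}_2$, so by Arzel\`a--Ascoli every sequence $t_j\to\infty$ admits a subsequence along which $\gamma(\cdot,t_j)$ converges smoothly, after translation, to a limit curve. Because $\int_0^\infty\vnn[2]{\nabla E}^2\,dt<\infty$ we may choose $t_j$ with $\vnn[2]{\nabla E}(t_j)\to0$, so the limit satisfies $\nabla E=0$ and, by the rigidity theorem of Section~\ref{Srigid}, is a round $\omega$-circle with $\omega=\frac1{2\pi}\int_{\gamma_0}k_0\,ds_0$ (the winding number is conserved). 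To upgrade this to full convergence I would avoid the \L ojasiewicz--Simon machinery and instead establish a spectral gap: near an $\omega$-circle of the prescribed length one has $\vnn[2]{\nabla E}^2\ge c\,E$ with $c>0$ depending only on $L_\pm$ and $\omega$, obtained by linearising $\nabla E$ about the circle and quotienting out its kernel (the constant-curvature mode and the Euclidean symmetries). Together with the dissipation identity this gives $\frac{d}{dt}E\le -c\,E$ once the flow is close to the circle, hence exponential decay of $E$; feeding this through the interpolation inequalities produces exponential decay of every curvature derivative, and integrability in time of the normal speed then fixes the limiting circle and promotes subsequential convergence to genuine exponential convergence in $C^\infty$ (no family of reparametrisations is needed, as the flow is purely normal).

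The final assertion is then obtained by running the argument of Section~\ref{Sstability} on the scale-invariant energy $(L^3E)[\gamma_t]$. The crucial step is a scale-invariant estimate $(L^3E)\le C\,L^3\vnn[2]{\nabla E}^2$ valid while $(L^3E)$ is small, in which the smallness of $(L^3E)[\gamma_0]<\varepsilon_2$ is spent absorbing the lower-order and nonlinear contributions of $\nabla E$ into its leading term. This forces $\frac{d}{dt}(L^3E)\le0$ for as long as $(L^3E)<\varepsilon_2$, and a continuity argument keeps $(L^3E)<\varepsilon_2$ for all time. The smallness of the scale-invariant energy together with the finite integrated dissipation then bounds $\int_0^\infty\absolute{\frac{d}{dt}\log L}\,dt$, delivering uniform upper and lower bounds on $L$; part (b) applies and yields the stated exponential convergence.

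I expect the two Poincar\'e/spectral inequalities to be the main obstacle, namely the gap $\vnn[2]{\nabla E}^2\ge c\,E$ near the circle in (b) and the scale-invariant bound $(L^3E)\le C\,L^3\vnn[2]{\nabla E}^2$ under smallness. In each case the difficulty is identical: the Euler--Lagrange operator degenerates precisely on the equilibria, so one must isolate and quotient out its kernel, verify that the nonlinear remainder in $\nabla E$ is genuinely lower order so that the assumed smallness can absorb it, and---most delicately---secure \emph{uniform} constants depending only on $\omega$ and the length bounds. Closing this uniformity is what makes the global argument go through.
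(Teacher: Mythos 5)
Your outline reproduces much of the paper's architecture, but part (a) has a genuine gap: the lower bound on length over $[0,T)$, which you assert follows from the differential inequality for ``a suitable power of $L$ near collapse,'' does not follow. Quantitatively: Lemma \ref{kest} gives $\vn{k}_2 \le L\sqrt{2E[\gamma_0]} + 2\omega\pi L^{-1/2}$, so near collapse $\absolute{\tfrac{d}{dt}L} \le C(\omega,E[\gamma_0])\,L^{-1/2}\vnn[2]{\SK}$, i.e.\ $\absolute{\tfrac{d}{dt}L^{3/2}} \le C\vnn[2]{\SK}$. Integrating and using Cauchy--Schwarz with the dissipation identity $\int_0^T\vnn[2]{\SK}^2\,dt \le E[\gamma_0]$ bounds the total decrease of $L^{3/2}$ only by $C\sqrt{T\,E[\gamma_0]}$, which need not be smaller than $L[\gamma_0]^{3/2}$: finite-time collapse of length is not excluded, and $\vnn[2]{\SK}\in L^2$ alone cannot exclude it. This matters because your $\vn{k}_\infty$ bound and the constants in the Dziuk--Kuwert--Sch\"atzle interpolation both degenerate as $L\to0$, so without a lower length bound the curvature estimates and the continuation argument do not close. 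The paper's fix is a dichotomy, not an ODE bound: since $E$ is monotone, if ever $L^3(t_0) < \varepsilon_2/E[\gamma_0]$ then $(L^3E)[\gamma_{t_0}] < \varepsilon_2$ and the small-energy machinery of Section \ref{Sstability} takes over from $t_0$ (there the lower length bound, Lemma \ref{lengthfrombelow}, rests on the gap estimate of Proposition \ref{propcoolest} and the integrated dissipation of Proposition \ref{proppres}, plus exponential decay of $E$); otherwise the standing assumption \eqref{standingass} holds and the exponential upper bound on $L$ and the curvature estimates go through. You have all the needed pieces in your final paragraph, but you never wire them into (a); as written, (a) is incomplete.

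Two further points of comparison. For (b) the paper does not need your ``spectral gap once the flow is close to the circle,'' with its attendant problem of propagating closeness from a subsequence to an interval: it proves the \emph{unconditional} estimate \eqref{stabilityeqn}, $E \le CL^{3/2}\vnn[2]{\SK}$, valid for every closed curve, by recycling the complex structure $Q=(M+iN)e^{-i\theta}$ from the rigidity proof; with $L\le L_0$ this gives $E' \le -cE^2$, algebraic decay, and an explicit waiting time after which $(L^3E)<\varepsilon_2$ and Theorem \ref{TMmain} applies. (Your subconvergence route can be repaired, since $E(t_j)\to0$ and $E$ is monotone force $L^3E<\varepsilon_2$ eventually, but the gap inequality you would still have to prove is exactly what \eqref{stabilityeqn} replaces.) Finally, your key small-energy inequality is mis-scaled: $(L^3E)$ is scale invariant while $L^3\vnn[2]{\SK}^2$ scales like $\rho^{-6}$ under $\gamma\mapsto\rho\gamma$; the correct statement (Proposition \ref{propcoolest}) is $L^9\vnn[2]{\SK}^2 \ge c\,(L^3E)$, and the uniformity issue you correctly identify as the main obstacle is resolved in Lemma \ref{LMskp} by the quadratic control $|a_{\pm\omega}| \le 2L^2E$ of the kernel Fourier modes, which is what allows the degenerate directions to be absorbed under smallness.
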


\section{Ideal submanifolds}%{{{
\label{Sbroad}

It is a classical pursuit to find geometric shapes that exhibit desirable properties.
Let us take $X:M^n\rightarrow\R^{n+m}$ to be a smooth closed immersed submanifold of Euclidean space.
Often, in the case of submanifold theory, desirable properties are determined by the \emph{curvature} of the submanifold.
For example, minimal submanifolds have vanishing mean curvature vector, and CMC submanifolds have parallel mean curvature vector.
In the case of hypersurfaces ($m=1$), this means that the mean curvature scalar is constant.

Minimal hypersurfaces have a variational characterisation in terms of the $L^2$-gradient of the area functional
\[
	A[X] = \int_{M^n} d\mu
\]
where $d\mu$ is the area measure induced by $X$.
For smooth hypersurfaces with constant mean curvature, there are several variational approaches.
The first and most classical is to minimise $A(X)$ subject to a constraint on enclosed volume.
In terms of gradient flows, this perspective gives rise to the
\emph{volume-preserving mean curvature flow} (VPMCF), see
\cite{huiskenvpmcf}.\footnote{Another natural approach is to fix area, see
\cite{mccoyapmcf}.} 

Another approach is to build the volume- or area-preservation property into the
definition of the underlying Hilbert space, where the gradient is being taken.
From this perspective, we consider not the $L^2$-gradient of $A$ with
constraint but instead the $H^{-1}$-gradient of $A$.
This gives rise to the \emph{surface diffusion flow} (SDF), see \cite{wh6}.
The set of smooth equilibria set for both flows consists of CMC hypersurfaces, but the operators are very different.
For (VPMCF), the velocity is
\[
	H - \frac{1}{A[X]}\int_{M^n}H\,d\mu
\]
whereas for (SDF) the velocity is
\[
	\Delta H\,.
\]
Both have zero average, which is why both preserve enclosed volume. However one
is non-local and second order, and the other is fourth order.

In this paper we propose a third option.
The motivation for this choice is as follows.
For (VPMCF), we have a minimisation problem with a constraint.
This results in a non-local operator.
The (SDF) gives a local operator, but the variational problem is not in $L^2$; rather, it is in $H^{-1}$.
Our proposal is to consider the functional
\[
	E[X] = \frac12\int_{M^n} |\nabla H|^2\,d\mu
\]
and minimise $E$ in $L^2$.
This is a variational problem in $L^2$ and the operator is local.
Of course, nothing is free, and the resultant operator is now of sixth order.
However we feel that, intuitively at least, this is at least as good a situation as (VPMCF) and (SDF).

To test this intuition we need to check a few essential points:
\begin{itemize}
\item That the set of smooth equilibria consists only of CMC hypersurfaces; and
\item That the $L^2$-gradient flow around CMC hypersurfaces is stable.
\end{itemize}
The main results of the present paper are the confirmation of both of these points in the simplest case of $n=m=1$.
%}}}

\section{Planar ideal curves}%{{{
\label{Splanar}

Let us now give the details of the mathematical setting.
Suppose $\gamma_0:\S\rightarrow\R^2$ is a circle immersed regularly in the plane, and consider the energy
\[
E[\gamma_0] = \frac12\int_\gamma k_s^2\, ds\,,
\]
where $s$ denotes the Euclidean arc-length, $k = \IP{\gamma_{ss}}{\nu}$ is the
curvature, $\tau = (\tau_1, \tau_2) = \gamma_s$ is the tangent vector and $\nu = (-\tau_2,
\tau_1)$ the unit normal vector along $\gamma$.
Our convention here is that the normal vector points into the interior of
$\gamma$.

Consider now a one-parameter family of curves $\gamma:\S\times[0,T)\rightarrow\R^2$ evolving with a purely normal velocity
\footnote{This procedure can be carried out analogously with a
	tangential component to the velocity; this will carry through and then
cancel out at appropriate moments in the derivation. This is because $E$ does not depend on parametrisation, and tangential terms correspond only to reparametrisation.
}
\[
\partial_t\gamma = V\nu
\,.
\]
The commutator of the time and arc-length derivatives is given by
\[
[\partial_t,\partial_s] = kV\partial_s\,,
\]
and the measure $ds$ evolves by
\[
\partial_t\,ds = -kV\,ds\,.
\]
Since $[\partial_t,\partial_s]\gamma = kV\tau$, we find
\[
\partial_t\tau = \partial_s(V\nu) + kV\tau = V_s\nu\,.
\]
By the orthonormality of $\{\tau,\nu\}$, $\partial_t\nu = -V_s\tau$.
Similarly, as $[\partial_t,\partial_s]\tau = k^2V\nu$, we find
\[
\partial_t\kappa = \partial_s(V_s\nu) + k^2V\nu
 = (V_{ss}+k^2V)\nu - V_sk\tau\,,
\]
so that
\[
\partial_tk = \partial_t\IP{\kappa}{\nu}
 = V_{ss}+k^2V\,.
\]
Now $[\partial_t,\partial_s]k = Vkk_s$, so
\[
\partial_tk_s = \partial_s(V_{ss} + k^2V) + Vkk_s
 = V_{s^3} + V_sk^2 + 3Vkk_s\,.
\]
The evolution of the functional $E$ can now be calculated:
\begin{align*}
\frac{d}{dt}\frac12\int_\gamma k_s^2\,ds
 &= \int_\gamma k_s(V_{s^3} + V_sk^2 + 3Vkk_s)\,ds - \int_\gamma Vkk_s^2\,ds
\\
 &= \int_\gamma V\bigg[
                    -k_{s^4} - k_{ss}k^2 - 2kk_s^2 + 3kk_s^2 - \frac12kk_s^2
                  \bigg]\,ds
\\
 &= \int_\gamma V\bigg[
                    -k_{s^4} - k_{ss}k^2 + \frac12k_s^2k
                  \bigg]\,ds
\,.
\end{align*}
Above we have used the notation $V_{s^3} = V_{sss}$ and $k_{s^4} = k_{ssss}$.
Now for the flow $\gamma$ to be the steepest descent gradient flow of $E$ in $L^2$, we must have $E' = -\vn{\gamma_t}_2^2$, that is, we require
\[
V = k_{s^4} + k_{ss}k^2 - \frac12k_s^2k\,.
\]
We therefore have the flow:
\begin{equation}
\label{EQflow}
\partial_t\gamma = \bigg(k_{s^4} + k_{ss}k^2 - \frac12k_s^2k\bigg)\nu\,.
\end{equation}

%}}}

\section{Local existence and uniqueness}
\label{Sshort}
%{{{

By writing the solution locally in time as a graph over the
initial data and using classical PDE theory, we obtain the following local
well-posedness result.
This procedure is carried out in detail in \cite{CharlieThesis} for a more
general class of equations than we consider here.

\begin{thm}
\label{TMste}
Let $\gamma_0:\S\rightarrow\R^2$ be a closed immersed curve of class $C^{6,\alpha}$.
There exists a unique smooth maximal family
$\gamma:\S\times[0,T)\rightarrow\R^2$, $T\in(0,\infty]$,  of immersed curves
such that $\gamma(s,0) = \gamma_0(s)$ and
\[
	\partial_t\gamma = 	\Big(
				k_{s^4} + k_{ss}k^2 - \frac12k_s^2k
				\Big)\nu
				\,.
\]
Furthermore, if $T<\infty$, then the quantity
\[
Q[\gamma_t] = L[\gamma_t] + \int_\gamma k_{s^5}^2\,ds
\]
is unbounded as $t\rightarrow T$.
\end{thm}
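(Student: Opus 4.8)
The plan is to establish local existence via a standard graph parametrization, and then prove the continuation criterion (the characterization of finite-time blowup) by a contrapositive argument: assuming the geometric quantity $Q[\gamma_t]$ stays bounded on $[0,T)$ with $T<\infty$, I would show that all higher-order curvature derivatives remain bounded, which forces the flow to extend past $T$, contradicting maximality.

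For the local existence and uniqueness part, I would write the evolving curve locally as a normal graph over the initial immersion $\gamma_0$. Fixing a reference curve and writing $\gamma(\cdot,t) = \gamma_0 + u(\cdot,t)\nu_0$ (plus a tangential correction to fix the gauge), the flow \eqref{EQflow} becomes a scalar parabolic PDE for the height function $u$. Because $V = k_{s^4} + k_{ss}k^2 - \frac12 k_s^2 k$ contains the term $k_{s^4}$, the resulting equation for $u$ is quasilinear and parabolic of sixth order, with leading symbol $-\partial_s^6$ acting on $u$; the linearization has the sign making it a well-posed (forward) parabolic problem. With $\gamma_0 \in C^{6,\alpha}$, standard Schauder or semigroup theory for higher-order quasilinear parabolic equations (as carried out in \cite{CharlieThesis}) yields a unique short-time solution in the appropriate H\"older class, which is then smooth for positive times by parabolic bootstrapping. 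This gives the maximal existence time $T \in (0,\infty]$ and uniqueness. I do not expect novelty here; it is a routine application of the cited general framework.

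The substantive part is the continuation criterion. The strategy is: suppose $T < \infty$ and suppose for contradiction that $Q[\gamma_t] = L[\gamma_t] + \int_\gamma k_{s^5}^2\,ds$ remains bounded as $t \to T$. First, a uniform bound on $\int_\gamma k_{s^5}^2\,ds$ together with the length bound controls, via interpolation inequalities of Gagliardo--Nirenberg type on the evolving curve (with constants depending only on $L$), all the lower-order quantities $\int_\gamma k_{s^j}^2\,ds$ for $0 \le j \le 5$, as well as pointwise $L^\infty$ bounds on $k$ and its derivatives up to sufficiently high order. Next, I would derive evolution equations for the higher curvature energies $\int_\gamma k_{s^m}^2\,ds$ for $m \ge 5$, using the commutator $[\partial_t,\partial_s] = kV\partial_s$ and $\partial_t\,ds = -kV\,ds$ to compute $\frac{d}{dt}\int_\gamma k_{s^m}^2\,ds$. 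Each such evolution produces a good negative leading term $-\int_\gamma k_{s^{m+3}}^2\,ds$ coming from the sixth-order operator, plus lower-order terms that are polynomial in curvature and its derivatives; these are absorbed by the good term after interpolation, using the already-established bounds, yielding a differential inequality of the form $\frac{d}{dt}\int_\gamma k_{s^m}^2\,ds \le C_m$ with $C_m$ depending only on the bound on $Q$ and on $L$. Integrating over the finite interval $[0,T)$ gives uniform bounds on all these energies up to time $T$.

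Once all curvature energies $\int_\gamma k_{s^m}^2\,ds$ are bounded uniformly on $[0,T)$, Sobolev embedding on the evolving curve gives uniform $C^\infty$ bounds on the curvature and all its arclength derivatives; combined with the length bound (which prevents the curve from shrinking or escaping to infinity) and control of the parametrization, these bounds extend continuously to $t=T$, so $\gamma(\cdot,t) \to \gamma(\cdot,T)$ in $C^\infty$ to a smooth immersed curve. Applying the local existence result with $\gamma(\cdot,T)$ as new initial data then extends the solution beyond $T$, contradicting the maximality of $T$. Hence if $T<\infty$ then $Q[\gamma_t]$ must be unbounded. The main obstacle I anticipate is the bookkeeping in the evolution-equation estimates: controlling the proliferation of lower-order terms in $\frac{d}{dt}\int_\gamma k_{s^m}^2\,ds$ and verifying that every one of them can be absorbed by the good negative term via the correct Gagliardo--Nirenberg interpolation with $L$-dependent constants. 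Getting the interpolation exponents right so that the nonlinear terms are genuinely subcritical relative to the sixth-order smoothing is the delicate point; the rest is structural.
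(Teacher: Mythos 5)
Your local-existence paragraph coincides with the paper's treatment: the paper likewise writes the flow as a normal graph over the initial curve and delegates the resulting sixth-order quasilinear parabolic theory to \cite{CharlieThesis}, so there is nothing to compare there beyond one sign slip --- the well-posed linearisation is $\partial_t u = +\partial_s^6 u + \dots$, whose principal symbol is $-\xi^6$; the operator $-\partial_s^6$ that you wrote would give the backward, ill-posed problem (your hedge about ``the sign making it forward parabolic'' shows you know this, but as written it is wrong).

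Where you genuinely diverge is the continuation criterion, and here the paper's argument (the remark following Theorem \ref{TMste}) is far shorter than yours and uses no parabolic energy estimates at all: if $Q[\gamma_t]$ is bounded then $k$ is bounded in $W^{5,2}$ with $L$ bounded, so by Sobolev embedding alone $k$ is bounded in $C^{4,1/2}$, hence $\gamma$ in $C^{6,\alpha}$ for $\alpha\le\tfrac12$; the flow then converges as $t\to T$ to a $C^{6,\alpha}$ limit curve, which is \emph{exactly} the regularity the local existence statement demands, and restarting contradicts maximality. This is the point of calibrating the hypothesis ($C^{6,\alpha}$ data) to the blowup quantity ($k_{s^5}$ in $L^2$): one does not need a smooth limit at $t=T$, only a $C^{6,\alpha}$ one, since parabolic smoothing after the restart recovers smoothness. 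Your route --- deriving $\frac{d}{dt}\int_\gamma k_{s^m}^2\,ds \le C_m$ for all $m$ by interpolating the nonlinear terms against the good term $-2\int_\gamma k_{s^{m+3}}^2\,ds$ --- is viable, but it is precisely the machinery the paper deploys later (Theorem \ref{curvatureestimates} and Theorem \ref{TMtinf}) for \emph{global} existence; at this stage it proves more than is needed, at the cost of exactly the bookkeeping you flag as delicate. What the paper's shortcut buys is that none of that bookkeeping is required for Theorem \ref{TMste}; what your version buys is $C^\infty$ compactness at $T$, which the theorem does not ask for.

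One caveat applies to both arguments but bites yours harder, since you lean on interpolation inequalities whose constants degenerate as $L\to 0$: your parenthetical that the length bound ``prevents the curve from shrinking'' is not correct --- $Q$ bounds $L$ only from above, and a shrinking near-circle has small $\int_\gamma k_{s^5}^2\,ds$, so a lower bound on $L$ must be \emph{proved}, not assumed; without it neither $\|k\|_\infty$ (controlled via $\bar k = 2\pi\omega/L$ plus $\int_\gamma|k_s|\,ds$) nor your Gagliardo--Nirenberg constants are under control. The gap is fixable in one line: since $L' = -\|k_{ss}\|_2^2 + \tfrac72\|kk_s\|_2^2 \ge -\|k_{ss}\|_2^2$ and Wirtinger's inequality gives $\|k_{ss}\|_2^2 \le (L/2\pi)^6\|k_{s^5}\|_2^2 \le CL^6$, one gets $\frac{d}{dt}L^{-5} \le 5C$, so $L$ stays bounded away from zero on any finite time interval when $Q$ is bounded. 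With that supplied (the paper's own remark tacitly needs it too, as its Sobolev constants depend on $L$ from below), your proof closes.
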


\begin{rmk}
The regularity condition and blowup criterion of Theorem \ref{TMste} are not optimal.
One natural hypothesis would be that the flow exists uniquely for initial data of class $W^{3,2}$.
Then, the energy controls automatically the high derivative term in the $W^{3,2}$-norm, whereas the lowest order term is controlled by the length.
We identify in Sections \ref{Sglobal} and \ref{Sstability} estimates on length as the critical ingredient for convergence of the flow.
\end{rmk}

\begin{rmk}
If the quantity $Q[\gamma_t]$ remains uniformly bounded and $T<\infty$ is maximal, then by
the standard Sobolev inequality $\gamma$ is uniformly bounded in
$C^{6,\alpha}$, and we may assert that the flow converges as $t\rightarrow T$
to a $C^{6,\alpha}$ limiting curve.
This then allows us to apply again the short time existence theorem,
contradicting the maximality of $T$.
\end{rmk}

%}}}

\section{Rigidity for closed ideal curves}
\label{Srigid}
%{{{

Let $\gamma$ be a closed curve satisfying
\[
	\SK[\gamma] := k_{s^4} + k_{ss}k^2 - \frac12k_s^2k = 0
	\,,
\]
that is, a stationary solution to the $L^2$-gradient flow of $E$. Recall that such curves are called \emph{ideal}.
In this section we prove:

\begin{thm}
\label{TM1}
Suppose $\gamma:\S\rightarrow\R^2$ is a smooth closed ideal curve.
Then $\gamma(\S) = \S_r(x)$, that is, $\gamma$ is a standard round $\omega$-circle.
\end{thm}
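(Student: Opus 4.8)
The goal is to show that the only closed solutions of $\SK[\gamma]=0$ have constant curvature, since a smooth closed immersed planar curve with constant curvature is necessarily a round circle traversed $\omega=\frac1{2\pi}\int_\gamma k\,ds$ times. The plan is to exploit the scaling behaviour of $E$ rather than to attack the fourth-order equation $\SK=0$ head-on. A direct energy computation looks unpromising: multiplying $\SK=0$ by $k$ and integrating by parts over the closed curve only yields the identity $\int_\gamma k_{ss}^2\,ds=\frac72\int_\gamma k^2k_s^2\,ds$, which relates two nonnegative quantities without forcing either to vanish.

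Instead I would run a Pohozaev-type argument. Under the rescaling $\gamma\mapsto\lambda\gamma$ one has $ds\mapsto\lambda\,ds$, $k\mapsto\lambda^{-1}k$ and $k_s\mapsto\lambda^{-2}k_s$, so $E$ is homogeneous of degree $-3$: $E[\lambda\gamma]=\lambda^{-3}E[\gamma]$, whence $\frac{d}{d\lambda}E[\lambda\gamma]\big|_{\lambda=1}=-3E[\gamma]$. On the other hand, the variation field generating this family at $\lambda=1$ is the position vector $\gamma$ itself, whose normal speed is $V=\IP{\gamma}{\nu}$. The first variation formula established in Section \ref{Splanar}, which for the reparametrisation-invariant functional $E$ depends only on the normal component of the variation, gives $\frac{d}{d\lambda}E[\lambda\gamma]\big|_{\lambda=1}=-\int_\gamma\IP{\gamma}{\nu}\,\SK[\gamma]\,ds$. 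Since $\gamma$ is ideal, $\SK[\gamma]\equiv0$ and this integral vanishes. Comparing the two expressions forces $-3E[\gamma]=0$, i.e. $\int_\gamma k_s^2\,ds=0$, so $k_s\equiv0$ and $k$ is constant.

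If one prefers a self-contained computation, the identity $\int_\gamma\IP{\gamma}{\nu}\,\SK\,ds=\frac32\int_\gamma k_s^2\,ds$ can be proved directly by repeated integration by parts, using the support-function relations $\partial_s\IP{\gamma}{\nu}=-k\IP{\gamma}{\tau}$ and $\partial_s\IP{\gamma}{\tau}=1+k\IP{\gamma}{\nu}$ to rewrite all the $\IP{\gamma}{\nu}$-weighted terms; the scaling heuristic above is precisely what predicts the clean constant $\frac32$ (equivalently the degree $-3$) and guarantees that the dimensionally-forced cancellations occur. The final step, that constant curvature plus closedness yields a round $\omega$-circle, is routine: $k\equiv0$ is impossible for a closed curve, so $k\equiv k_0\neq0$ and integrating $\gamma_{ss}=k_0\nu$ produces a circle of radius $1/|k_0|$ covered $\omega$ times.

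The main obstacle is the justification of the first-variation step for the scaling field, which is not purely normal: I must invoke the parametrisation-independence of $E$ to discard the tangential part $\IP{\gamma}{\tau}\tau$ of the variation, so that only $V=\IP{\gamma}{\nu}$ contributes. The crux of the whole argument is then simply that the scaling degree $-3$ is nonzero, which converts the vanishing of the first variation into the vanishing of $E$ itself; verifying that the homogeneity degree is indeed $-3$ (and not, say, $0$) is exactly what makes the rigidity work.
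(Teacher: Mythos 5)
Your proof is correct, and it takes a genuinely different route from the paper's. You derive the Pohozaev-type identity $\int_\gamma \IP{\gamma}{\nu}\,\SK\,ds = 3E[\gamma]$ from the homogeneity $E[\lambda\gamma]=\lambda^{-3}E[\gamma]$, so that $\SK\equiv 0$ forces $E=0$, hence $k_s\equiv 0$, and closedness rules out $k\equiv 0$. The one delicate step --- discarding the tangential part $\IP{\gamma}{\tau}\tau$ of the scaling field --- is exactly what the footnote in Section \ref{Splanar} licenses: on a closed curve a tangential field generates a reparametrisation flow, under which $E$ is constant, so only $V=\IP{\gamma}{\nu}$ contributes to the first variation. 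Your fallback computation also checks out: with $p=\IP{\gamma}{\nu}$, $q=\IP{\gamma}{\tau}$, one has $p_{ss}=-k_sq-k-k^2p$, so $\int_\gamma p\,k_{s^4}\,ds=\int_\gamma p_{ss}k_{ss}\,ds=\frac32\int_\gamma k_s^2\,ds+\int_\gamma p\big(\tfrac12 kk_s^2-k^2k_{ss}\big)\,ds$, and the last integral cancels the remaining terms of $\SK$, confirming $\int_\gamma p\,\SK\,ds=\frac32\int_\gamma k_s^2\,ds$. The paper instead integrates the ODE once to get the conserved quantity $Q=k_{s^3}^2+k_{ss}^2k^2+\frac14k_s^4-k_{ss}k_s^2k\equiv C$, splits into the cases $C=0$ and $C>0$, and in the latter writes $Q=M^2+N^2$ with $M=k_{s^3}$, $N=k_{ss}k-\frac12k_s^2$, introduces a phase $\phi$ with $\phi_s=k$ (this is where analyticity is invoked), identifies $\phi$ with the tangential angle, and uses closedness $\int_\gamma\tau\,ds=0$ together with $\int_\gamma N\,ds=-\frac32\int_\gamma k_s^2\,ds$ to force $k_s\equiv 0$. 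Your argument is shorter, avoids both the analyticity step and the case analysis, and is conceptually transparent: rigidity holds precisely because the scaling degree $-3$ is nonzero, the same mechanism the paper identifies as driving the flow to inflate curves. What the paper's heavier machinery buys is reuse: the quantities $M$, $N$ reappear in the proof of Theorem \ref{TMglobal} to give the quantitative stability estimate \eqref{stabilityeqn}, $E\le CL^{3/2}\vn{\SK}_2$, of which the rigidity theorem is the case $\SK\equiv 0$. Note, however, that your identity quantifies just as well: since translation invariance of $E$ gives $\int_\gamma \SK\,\nu\,ds=0$, the identity is independent of the choice of origin, and placing the origin on the curve so that $\vn{\gamma}_\infty\le L/2$, Cauchy--Schwarz yields $3E=\int_\gamma\IP{\gamma}{\nu}\,\SK\,ds\le \frac12 L^{3/2}\vn{\SK}_2$, recovering \eqref{stabilityeqn} with an explicit constant.
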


\begin{proof}
{\bf Integrating the equation.}
Since $\SK[\gamma] = 0$ we find by a standard argument that $\gamma$ is analytic and
\begin{align*}
(k_{s^3}^2)_s &+ 2k_{s^3}k_{ss}k^2 - k_{s^3}k_s^2k
= (k_{s^3}^2 + k_{ss}^2k^2)_s - 2k_{ss}^2k_sk - k_{s^3}k_s^2k
\\
&= (k_{s^3}^2 + k_{ss}^2k^2 - k_{ss}k_s^2k)_s - 2k_{ss}^2k_sk + 2k_{ss}^2k_sk + k_{ss}k_s^3
\\
&= 0
\end{align*}
which implies
\[
	\Big(
	k_{s^3}^2 + k_{ss}^2k^2 + \frac14k_s^4 - k_{ss}k_s^2k
	\Big)_s = 0
	\,.
\]
Therefore there exists a $C\in\R$ such that for each $s\in[0,L]$,
\[
	Q(s) =
	k_{s^3}^2 + k_{ss}^2k^2 + \frac14k_s^4 - k_{ss}k_s^2k
	= C\,.
\]
Let $s_0\in[0,L]$ be a point where $k_s(s_0) = 0$ (note that $k$ is a smooth periodic function).
Then
\begin{equation}
\label{EQ1}
	Q(s_0) =
	(k_{s^3}^2 + k_{ss}^2k^2)(s_0) = C \ge 0\,.
\end{equation}
We have two cases.

{\bf Case 1: $C=0$.}
In this case integration yields
\[
\int_\gamma \Big[ k_{s^3}^2 + k_{ss}^2k^2 + \frac{7}{12} k_s^4 \Big]\,ds = 0\,.
\]
We conclude that $k_s$ is constant, which, together with the closedness of $\gamma$, implies the result.

{\bf Case 2: $C>0$.}
Consider the rescaling $\eta = \rho\gamma$.
We calculate on $\eta$
\[
	Q(s) =
	k_{s^3}^2 + k_{ss}^2k^2 + \frac14k_s^4 - k_{ss}k_s^2k
	= C\rho^{-8}\,.
\]
Choosing $\rho = C^\frac18$ we find
\begin{equation}
\label{EQ2}
	Q(s) =
	k_{s^3}^2 + k_{ss}^2k^2 + \frac14k_s^4 - k_{ss}k_s^2k
 = 1\,.
\end{equation}
Now we write $Q(s) = M^2(s) + N^2(s)$ where
\[
M(s) = k_{s^3}
\quad\text{and}\quad
N(s) = k_{ss}k - \frac12k_s^2\,.
\]
Equation \eqref{EQ2} implies that there exists a $\phi:[0,L]\rightarrow\R$ such that
\[
M(s) = \cos\phi(s)
\quad\text{and}\quad
N(s) = \sin\phi(s)
\,.
\]
Since
\[
N_s = \phi_s \cos\phi = k_{s^3}k = kM
\]
and
\[
M_s = k_{s^4} = -\phi_s\sin\phi = -\phi_sN
\]
we have by analyticity $\phi_s = k$.

Since the integral of $k$ is $2\omega\pi$ we have
\[
\phi(s+nL) = \phi(s) + 2\omega n \pi\,,
\]
where $\omega$ is the winding number of $\gamma$.
Let $\tau(s) = (x_s,y_s)$ where $\gamma = (x,y)$. Then $x_s^2 + y_s^2 = 1$ and again we find a function
$\theta:[0,L]\rightarrow\R$ such that
\[
\tau(s) = (\cos\theta(s), \sin\theta(s))\,.
\]
This function $\theta$ is (up to translation) the standard notion of tangential angle.
As $\tau_s = k\nu = \theta_s(s) (-\sin\theta,\cos\theta)$ and $\nu(s) =
(-\sin\theta,\cos\theta)$, we must also have that $\theta_s = k$ and so
\[
\theta(s) = \phi(s) + \theta_0
\]
for some $\theta_0\in\R$.
This implies that
\begin{align*}
\tau(s) &= (\cos(\phi(s) + \theta_0),\ \sin(\phi(s) + \theta_0))
\\
 &= (\cos\phi(s)\cos\theta_0 - \sin\phi(s)\sin\theta_0,\
    \sin\phi(s)\cos\theta_0 + \cos\phi(s)\sin\theta_0)
\,.
\end{align*}
In particular
\[
y_s = N(s)\cos\theta_0 + M(s)\sin\theta_0\,.
\]
By closedness, we have
\[
0 = \int_\gamma y_s\,ds
 = \cos\theta_0\int_\gamma N(s)\,ds + \sin\theta_0\int_\gamma M(s)\,ds\,.
\]
Now
\[
\int_\gamma M(s)\,ds = \int_\gamma k_{s^3}\,ds = 0
\]
and
\[
\int_\gamma N(s)\,ds = -\frac32\int_\gamma k_s^2\,ds\,.
\]
Therefore we find
\[
\cos\theta_0\int_\gamma k_s^2\,ds = 0\,.
\]
Either $k_s = 0$ and we again conclude the result, or $\cos\theta_0 = 0$.
In the latter case, we note that $\sin\theta_0 \ne 0$ and calculate
\[
0 = \int_\gamma x_s\,ds = \sin\theta_0 \frac32\int_\gamma k_s^2\,ds\,.
\]
This again implies $k_s = 0$ and so we are finished.

\end{proof}
%}}}

\section{Global existence for arbitrary initial data}%{{{
\label{Sglobal}

Along an ideal curve flow, the functional $\vn{k_s}_2^2$ is monotone decreasing.
Inflating the curve by scaling $\gamma \mapsto \rho\gamma$ for
$\rho\rightarrow\infty$ decreases the energy to zero, regardless of $\gamma$.
This is why the flow tends to enlarge the initial data.
In fact, the flow may enlarge the initial data without end.
We prove in this section that the only kind of blowup that can occur along an
ideal curve flow is that length becomes unbounded, and that furthermore this
can not happen in finite time; implying that the ideal curve flow with smooth
initial data exists for all time.

\begin{thm}  
\label{TMglobal} 
Let $\gamma_0:\S\rightarrow\R^2$ be a smooth immersed curve,
and $\gamma:\S\times[0,T)\rightarrow\R^2$ be the ideal curve flow with $\gamma_0$ as initial data.
Then $T=\infty$, and if there exists an $L_0\in\R$ such that $L(t)\le L_0$ for all $t\in[0,T)$, 
$\gamma$ converges exponentially fast in the $C^\infty$-topology to a standard round $\omega$-circle.
\end{thm}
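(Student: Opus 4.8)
The plan is to prove the two assertions of Theorem \ref{TMglobal} in sequence: first global existence ($T=\infty$), then the convergence statement under a uniform length bound. The natural strategy is to contradict the blowup criterion from Theorem \ref{TMste}, which tells us that if $T<\infty$ then $Q[\gamma_t] = L[\gamma_t] + \int_\gamma k_{s^5}^2\,ds$ must blow up. The flow is a steepest-descent gradient flow, so the energy $E[\gamma_t]$ is monotone non-increasing; in particular $\vn{k_s}_2^2$ is bounded by its initial value for all time. The key point is that on a \emph{finite} time interval, length cannot blow up: since $\frac{d}{dt}L = -\int_\gamma kV\,ds = -\int_\gamma k\,\SK[\gamma]\,ds$ and the right-hand side can be estimated (after integration by parts) by energy-controlled quantities, one obtains $\frac{d}{dt}L \le C(E[\gamma_0]) \, L + \text{(lower order)}$ or a similar differential inequality, yielding at worst exponential growth of $L$ on any finite interval. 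Therefore $L$ stays bounded as $t\to T<\infty$, and by the blowup criterion it must be $\int_\gamma k_{s^5}^2\,ds$ that blows up.

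The heart of the global-existence argument is then to rule out finite-time blowup of $\int_\gamma k_{s^5}^2\,ds$ while $L$ remains bounded. The standard technique here is to derive interpolation inequalities for the higher arclength derivatives of curvature. Concretely, I would establish evolution equations for $\int_\gamma k_{s^m}^2\,ds$ for each $m$, of the form $\frac{d}{dt}\int_\gamma k_{s^m}^2\,ds = -2\int_\gamma k_{s^{m+3}}^2\,ds + (\text{lower-order terms})$, where the dissipative top-order term dominates. Using the Gagliardo-Nirenberg-type interpolation inequalities on closed curves (these appear in \cite{DKS} for the elastic flow and can be adapted here), together with the bound on $\vn{k_s}_2^2$ and on $L$, one absorbs the lower-order terms into the dissipation and concludes that each $\int_\gamma k_{s^m}^2\,ds$ stays bounded on finite time intervals. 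This contradicts the blowup of $Q$ and forces $T=\infty$.

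For the convergence statement, assume now $L(t)\le L_0$ uniformly. The same interpolation machinery, now combined with the \emph{uniform} length bound, upgrades the finite-time bounds to \emph{time-uniform} bounds on all $\int_\gamma k_{s^m}^2\,ds$, giving uniform $C^\infty$ control of the evolving curves (after fixing a suitable parametrisation, e.g. by arclength scaled to a fixed interval). Since $E$ is non-increasing and bounded below by zero, $\frac{d}{dt}E = -\vn{\SK[\gamma]}_2^2$ is integrable in time, so there is a sequence $t_j\to\infty$ along which $\vn{\SK[\gamma_{t_j}]}_2\to 0$; by the uniform estimates and compactness, a subsequence converges smoothly to a limit curve with $\SK=0$, which by the rigidity Theorem \ref{TM1} is a standard round $\omega$-circle. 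To promote subsequential convergence to full exponential convergence, I would argue that once the curve is sufficiently close to a round circle the scale-invariant energy is small, invoking the exponential-decay mechanism described in the overview (the careful analysis of $\SK$ near circles that replaces a \L ojasiewicz--Simon argument). The exponential decay of $E$, together with the uniform derivative bounds, then yields exponential convergence in $C^\infty$ without a family of reparametrisations.

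The main obstacle I anticipate is the convergence step rather than global existence: global existence follows a well-trodden path of energy monotonicity plus interpolation, but converting subsequential convergence into genuine exponential $C^\infty$ convergence is delicate. The difficulty is that the rigidity theorem only identifies the equilibria as a \emph{family} (circles of every radius and centre), so the limit is not a priori unique; one must show the flow does not drift along this family and must extract a quantitative decay rate. The cleanest route is to prove that smallness of the scale-invariant energy $(L^3E)$ forces exponential decay of $E$ directly — this is precisely the delicate estimate flagged in the overview — and then bootstrap through the interpolation inequalities to get exponential decay of every $\int_\gamma k_{s^m}^2\,ds$, pinning down a unique limiting circle.
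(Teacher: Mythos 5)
Your proposal is sound, and its global-existence half is essentially the paper's own argument: Lemma \ref{lemlength} is your exponential-in-time length bound (note the paper first disposes of the small-length case via the dichotomy \eqref{standingass}, since the interpolation constants degenerate as $L\to0$), Theorem \ref{curvatureestimates} carries out your evolution-plus-interpolation scheme using $\partial_t k_{s^l} = k_{s^{l+6}} + P_3^{4+l}(k) + P_5^{2+l}(k)$ and Proposition 2.5 of \cite{DKS} to absorb lower-order terms into the dissipation $-2\int_\gamma k_{s^{l+3}}^2\,ds$, and Theorem \ref{TMtinf} concludes by the standard extension argument. Where you genuinely diverge is in how the flow is brought into the small-energy regime where Theorem \ref{TMmain} applies. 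You reach smallness of $(L^3E)$ qualitatively: $\vn{\SK}_2^2\in L^1([0,\infty))$ gives subconvergence along $t_j$, rigidity (Theorem \ref{TM1}) identifies the sublimit as an $\omega$-circle so $E(t_j)\to0$, and monotonicity of $E$ propagates this to all times. The paper mentions exactly this soft argument in one line and then replaces it with a quantitative device: reusing the quantities $M=k_{s^3}$ and $N=k_{ss}k-\frac12k_s^2$ from the rigidity proof, it sets $Q=(M+iN)e^{-i\theta}$, observes $Q'=\SK e^{-i\theta}$ and $-\frac32 iE=\int_\gamma(M+iN)\,ds$, and deduces the global inequality $E\le CL^{3/2}\vn{\SK}_2$ (estimate \eqref{stabilityeqn}), valid with no smallness or compactness hypotheses; combined with $E'=-\vn{\SK}_2^2$ and $L\le L_0$ this yields algebraic decay $E(t)\le E[\gamma_0]/(1+c\,t)$ and an explicit waiting time $t_0\le L_0^{3/2}/(2C\varepsilon_2)$ after which $(L^3E)<\varepsilon_2$. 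The paper's route buys effectiveness and sidesteps a point your soft argument quietly needs --- extracting the smooth sublimit requires time-uniform curvature estimates, which in turn require length bounded \emph{below} as well as above (supplied in the paper by the \eqref{standingass} dichotomy and Lemma \ref{lengthfrombelow}); you should make that dependence explicit. Your route buys brevity, using only monotonicity, rigidity and compactness, at the cost of being non-quantitative. From the waiting time onward the two arguments coincide: Proposition \ref{proppres} through Theorem \ref{TMmain} give exponential decay of $E$, interpolation upgrades this to exponential decay of all $\vn{k_{s^l}}_\infty$, and Theorem \ref{TMconvergence} delivers full $C^\infty$ convergence without a family of reparametrisations, exactly as you anticipated.
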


In this section we focus on the $T=\infty$ part of Theorem \ref{TMglobal},
leaving the convergence result for Section \ref{Sstability}.
There the ideal curve flow is studied under a condition on 
\begin{equation}
\label{smallenergypreview}
\frac12L^3(t)\vn{k_s}_2^2(t) = (L^3E)[\gamma_t]
\,.
\end{equation}
We prove that there exists an absolute constant $\varepsilon_2>0$ such that
$(L^3E)[\gamma_t] < \varepsilon_2$ implies convergence to an $\omega$-circle
(Theorem \ref{TMmain}).

In the present section, we wish to work without any condition on $L^3E$ at $t=0$.
However note that, since $E[\gamma]$ is decreasing, if at any time $L^3(t_0) <
\varepsilon_2/E[\gamma_0]$, the smallness condition would be satisfied at $t_0$,
and the results of the next section (and in particular Theorem \ref{TMmain})
would apply.
Therefore we assume that this is not the case for the remainder of this section, that is, that the estimate
\begin{equation}
\label{standingass}
L^3(t) \ge \frac{\varepsilon_2}{E[\gamma_0]}
\end{equation}
holds.

The local existence result Theorem \ref{TMste} states that if $T<\infty$, then
at least one of length or $\vn{k_{s^5}}_2^2$ become unbounded in finite time.
We will show below that neither one of these possibilities can occur.

We begin by noting the following scale-invariant curvature estimate.

\begin{lem}
\label{kest}
For any immersed curve $\gamma:\S\rightarrow\R^2$ we have the estimate
\[
L\vn{k}_\infty \le \sqrt{L^3\vn{k_s}_2^2} + 2\omega\pi
\,.
\]
Here $\omega$ is the winding number of $\gamma$.
\end{lem}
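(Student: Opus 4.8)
The plan is to prove this as a standard interpolation (Sobolev-type) estimate: I will control $\vn{k}_\infty$ by the mean value of $k$ together with the $L^2$-norm of $k_s$, the only geometric input being that the mean curvature is pinned down by the winding number. Since $\vn{k}_\infty$, $\vn{k_s}_2^2$ and $L$ are all invariant under reversal of orientation (which merely sends $k\mapsto -k$), I may assume without loss of generality that $\omega\ge 0$.

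First I would record the total-curvature normalisation $\int_\gamma k\,ds = 2\omega\pi$, so that the average curvature is $\kav = \frac1L\int_\gamma k\,ds = \frac{2\omega\pi}{L}$. As $k$ is smooth, hence continuous, on the compact curve $\S$, the intermediate value theorem supplies a point $s_0\in[0,L]$ with $k(s_0) = \kav$. Next, for arbitrary $s$ I would use the fundamental theorem of calculus to write $k(s) = \kav + \int_{s_0}^{s} k_\sigma\,d\sigma$ and estimate, via Cauchy--Schwarz applied to $\int_\gamma \absolute{k_s}\cdot 1\,ds$,
\[
\absolute{k(s)} \le \absolute{\kav} + \int_\gamma \absolute{k_s}\,ds \le \frac{2\omega\pi}{L} + \sqrt{L}\,\vn{k_s}_2\,.
\]
Multiplying through by $L$ and taking the supremum over $s$ then gives $L\vn{k}_\infty \le \sqrt{L^3\vn{k_s}_2^2} + 2\omega\pi$, as claimed.

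There is no substantive obstacle here; the argument is elementary, and the only points requiring minor care are the identification $\kav = 2\omega\pi/L$ (the total-curvature normalisation used throughout) and the existence of the mean-value point $s_0$. If one prefers to bypass the latter, one may instead integrate the identity $k(s) - k(s_0) = \int_{s_0}^{s} k_\sigma\,d\sigma$ against $ds_0$ over the curve, which replaces $k(s_0)$ by its average $\kav$ directly and yields the same bound. I note in passing that a sharper constant, with $\sqrt{L/2}$ in place of $\sqrt{L}$, is available by choosing the shorter of the two arcs from $s_0$ to $s$, but this refinement is not needed for the stated inequality.
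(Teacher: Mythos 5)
Your proof is correct and is essentially the paper's own argument: both decompose $k$ as $\kav + (k-\kav)$ with $\kav = 2\omega\pi/L$, bound the oscillation $|k-\kav|$ by $\int_\gamma |k_s|\,ds$ (via a mean-value point, which the paper leaves implicit), and finish with H\"older's inequality $\int_\gamma |k_s|\,ds \le \sqrt{L}\,\vn{k_s}_2$. Your added details --- the explicit point $s_0$ from the intermediate value theorem and the orientation-reversal reduction to $\omega \ge 0$ --- merely make rigorous what the paper's two-line calculation takes for granted.
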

\begin{proof}
%{{{
We calculate
\[
k = k - \kav + \kav \le \int_\gamma |k_s|\,ds + \frac{2\omega\pi}{L}
\,.
\]
Taking a supremum and using the H\"older inequality, we find
\[
\vn{k}_\infty \le \frac1L\bigg(\sqrt{L^3\vn{k_s}_2^2} + 2\omega\pi\bigg)
\,.
\]
%}}}
\end{proof}

\begin{rmk}
Note that we do not need the hypothesis \eqref{standingass} for Lemma \ref{kest} to hold.
This will be useful in the next section.
\end{rmk}

Now we show that length can not become unbounded in finite time.

\begin{lem}
\label{lemlength}
Along any ideal curve flow $\gamma:\S\times[0,T)\rightarrow\R^2$ we have
\[
L[\gamma_t] \le \Big(L[\gamma_0]\exp\big(E[\gamma_0]\big)\Big) e^{c_0t}\,,
\]
where $c_0 = c_0(\omega,E[\gamma_0])$.
\end{lem}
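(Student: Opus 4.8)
The plan is to track the evolution of $\log L$ rather than $L$ itself, so that the energy dissipation can be harnessed to absorb the dangerous term into a \emph{bounded} prefactor rather than an exponentially growing one. First I would differentiate length. Since $\partial_t\,ds = -kV\,ds$ with $V=\SK[\gamma]$, we obtain
\[
\frac{dL}{dt} = -\int_\gamma k\,\SK[\gamma]\,ds \le \vn{k}_2\,\vn{\SK[\gamma]}_2
\]
by Cauchy--Schwarz. A weighted Young inequality with weight $2L$ then gives $\vn{k}_2\vn{\SK[\gamma]}_2 \le L\vn{\SK[\gamma]}_2^2 + \tfrac1{4L}\vn{k}_2^2$, so that after dividing by $L$,
\[
\frac{d}{dt}\log L \le \vn{\SK[\gamma]}_2^2 + \frac{1}{4L^2}\vn{k}_2^2\,.
\]
The weight is chosen precisely so that the first term on the right carries coefficient exactly one; this is what will eventually produce the factor $\exp(E[\gamma_0])$.

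Next I would show that the residual term $\tfrac1{4L^2}\vn{k}_2^2$ is bounded by a constant depending only on $\omega$ and $E[\gamma_0]$. Estimating $\vn{k}_2^2 \le L\vn{k}_\infty^2$ and inserting Lemma \ref{kest} in the form $\vn{k}_\infty^2 \le L^{-2}\big(\sqrt{L^3\vn{k_s}_2^2}+2\omega\pi\big)^2$ yields, after expanding and using $\vn{k_s}_2^2 = 2E$,
\[
\frac{1}{4L^2}\vn{k}_2^2 \le \frac{E}{2} + \frac{\omega\pi\sqrt{2E}}{L^{3/2}} + \frac{\omega^2\pi^2}{L^3}\,.
\]
Since $E\le E[\gamma_0]$ by monotonicity of the energy, and since the standing assumption \eqref{standingass} bounds $L$ from below (hence $L^{-3/2}$ and $L^{-3}$ from above), the right-hand side is at most some $c_0 = c_0(\omega,E[\gamma_0])$. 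We thus reach $\frac{d}{dt}\log L \le \vn{\SK[\gamma]}_2^2 + c_0$. Integrating in time and invoking the dissipation identity $\int_0^t \vn{\SK[\gamma]}_2^2\,dt' = E[\gamma_0] - E[\gamma_t] \le E[\gamma_0]$ (which holds because $\gamma$ is the steepest descent flow, so $\frac{dE}{dt} = -\vn{\SK[\gamma]}_2^2$), we obtain $\log L[\gamma_t] \le \log L[\gamma_0] + E[\gamma_0] + c_0 t$, which exponentiates to the claimed bound.

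The main obstacle is not any single computation but the bookkeeping that makes the constants come out exactly right: the coefficient of $\vn{\SK[\gamma]}_2^2$ must be normalised to one after dividing by $L$, so that its finite time-integral contributes the \emph{closed} factor $\exp(E[\gamma_0])$ instead of a term growing linearly in $t$; and the leftover curvature term must be estimated in a scale-correct fashion. This is exactly where the scale-invariant Lemma \ref{kest} and the lower length bound \eqref{standingass} are indispensable, since without the latter the negative powers $L^{-3/2}$ and $L^{-3}$ could blow up as $L\to0$ and would not be absorbable into a constant. Once the differential inequality $\frac{d}{dt}\log L \le \vn{\SK[\gamma]}_2^2 + c_0$ is in hand, a routine Gr\"onwall/ODE-comparison argument finishes the proof.
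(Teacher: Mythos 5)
Your proposal is correct and follows essentially the same route as the paper: differentiate $L$, apply Cauchy--Schwarz to $-\int_\gamma k\SK\,ds$, control $\vn{k}_2$ via Lemma \ref{kest} together with the lower length bound from \eqref{standingass}, split off $\vn{\SK}_2^2$ by Young's inequality, and integrate $\log L$ using the dissipation identity \eqref{gradflowstruct}. The only difference is bookkeeping: your choice of Young weight puts coefficient exactly $1$ on $\vn{\SK}_2^2$, so the time integral yields $\exp\big(E[\gamma_0]\big)$ precisely as stated, whereas the paper's splitting produces $2\vn{\SK}_2^2$ and hence, strictly, the factor $\exp\big(2E[\gamma_0]\big)$ --- so your constant tracking is in fact marginally cleaner.
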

\begin{proof}
%{{{
We calculate and estimate using Lemma \ref{kest}
\begin{align*}
\frac{d}{dt}L
 &= -\int_\gamma k{\mathcal K}\,ds\leq \vn{k}_2\vn{\mathcal K}_2
\\
&\le \bigg(
	\frac{2\omega\pi}{\sqrt{L}}+\sqrt2 E^{\frac12}[\gamma_0]L
	\bigg)\vn{\mathcal K}_2
\,.
\end{align*}
Now, \eqref{standingass} implies
\[
L^{-1} \le \bigg(\frac{4E[\gamma_0]}{\varepsilon_0}\bigg)^\frac{1}3 := c_L\,.
\]
Therefore
\begin{align*}
\frac{d}{dt}\log L
	&\le L^{-\frac12}\bigg(
	\frac{2\omega\pi}{L}+\sqrt2 E^{\frac12}[\gamma_0]
	\bigg)\vn{\mathcal K}_2
\\
	&\le \sqrt{c_L}(2\omega\pi c_L + \sqrt2 E^{\frac12}[\gamma_0])\vn{\mathcal K}_2
\\
	&\le \frac{c_L}{4}(\sqrt{2}\omega\pi c_L + E^{\frac12}[\gamma_0])^2
	 + 2\vn{\mathcal K}_2^2
\,.
\end{align*}
Let us set $c_0 := \frac{c_L}{4}(\sqrt{2}\omega\pi c_L + E^{\frac12}[\gamma_0])^2$.
Integrating and using 
\begin{equation}
\label{gradflowstruct}
-2\int_0^t \vn{\SK}_2^2\,d\tau = \int_0^t \frac{d}{d\tau}\vn{k_s}_2^2\,d\tau
 = \vn{k_s}_2^2(t) - \vn{k_s}_2^2(0)
\,,
\end{equation}
we find
\[
\log L[\gamma_t] \le \log L[\gamma_0] + c_0t + E[\gamma_0]\,,
\]
or
\[
L[\gamma_t] \le \Big(L[\gamma_0]\exp\big(E[\gamma_0]\big)\Big) e^{c_0t}\,.
\]
%}}}
\end{proof}

Let us now prove global existence, that is, $T=\infty$.
Although we only need to show that $\vn{k_{s^5}}_2^2 \le C(t_0)$ on $[0,t_0]$,
$t_0<\infty$, what we shall actually prove is that \emph{all} derivatives of
curvature are uniformly bounded on any bounded time interval.

\begin{thm}
\label{curvatureestimates}
Along any ideal curve flow $\gamma:\S\times[0,T)\rightarrow\R^2$
we have for all $l\in\N$,
\[
\vn{k_{s^l}}_\infty \le C(l)
\,,
\]
where $C(l)$ a constant depending only on $l$, $T$, $\omega$ and $E[\gamma_0]$.
\end{thm}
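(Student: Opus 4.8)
The plan is to prove first, by induction on $l$, that each $L^2$ quantity $\vn{k_{s^l}}_2^2 = \int_\gamma k_{s^l}^2\,ds$ is bounded on every interval $[0,t_0]$ with $t_0<T$, and then to upgrade these to the claimed $L^\infty$ bounds by a one-dimensional Sobolev inequality. The base data are already available: monotonicity of $E$ along the gradient flow gives $\vn{k_s}_2^2(t)\le \vn{k_s}_2^2(0) = 2E[\gamma_0]$; Lemma \ref{lemlength} together with the standing assumption \eqref{standingass} bounds $L(t)$ from above (by $L[\gamma_0]e^{E[\gamma_0]}e^{c_0 t}\le L[\gamma_0]e^{E[\gamma_0]}e^{c_0 T}$) and from below; and then Lemma \ref{kest} yields a bound for $\vn{k}_\infty$ on $[0,t_0]$. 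This is exactly why the final constants are allowed to depend on $T$, $\omega$ and $E[\gamma_0]$.

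For the inductive step I would differentiate $\tfrac12\int_\gamma k_{s^l}^2\,ds$ in time. Using the commutator $[\partial_t,\partial_s] = k\SK\,\partial_s$ and the measure evolution $\partial_t\,ds = -k\SK\,ds$ recorded in Section \ref{Splanar}, one finds $\partial_t k_{s^l} = k_{s^{l+6}} + (\text{lower-order terms})$, where the lower-order terms are finite sums of products of $k$ and its arclength derivatives, each product carrying at least three factors (every commutator contribution introduces a factor of $k\SK$, and the non-leading part of $\partial_t k$ is itself cubic or higher). Hence
\[
\frac{d}{dt}\frac12\int_\gamma k_{s^l}^2\,ds = \int_\gamma k_{s^l}k_{s^{l+6}}\,ds + \int_\gamma (\text{lower-order terms})\,ds - \frac12\int_\gamma k_{s^l}^2\,k\SK\,ds\,,
\]
and integrating the first term by parts three times on the closed curve produces the good negative quantity $-\int_\gamma k_{s^{l+3}}^2\,ds$. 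A scaling count (each factor $k_{s^j}$ scales like $\rho^{-1-j}$ and $ds$ like $\rho$ under $\gamma\mapsto\rho\gamma$) shows that every remaining term is homogeneous of the same degree as this leading term, which is what makes the next step possible.

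The heart of the matter is then the absorption of these lower-order terms. Here I would invoke the Gagliardo--Nirenberg--Sobolev interpolation inequalities for closed curves (in the style of \cite{DKS}), which bound each $\vn{k_{s^i}}_p$ by a product of a power of $\vn{k_{s^{l+3}}}_2$ and a power of $\vn{k}_2$, together with lower-order corrections, the exponents being fixed by the same homogeneity. Young's inequality then bounds each lower-order integral by $\tfrac12\int_\gamma k_{s^{l+3}}^2\,ds$ plus a constant times a power of $\int_\gamma k_{s^l}^2\,ds$ and quantities already controlled ($\vn{k}_\infty$, $L$, $L^{-1}$ and the lower $L^2$ norms, all bounded by the induction hypothesis). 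Absorbing the first piece into the good term leaves a differential inequality of the form
\[
\frac{d}{dt}\int_\gamma k_{s^l}^2\,ds \le c_1\int_\gamma k_{s^l}^2\,ds + c_2\,,
\]
with $c_1,c_2$ depending only on $l$, $\omega$, $E[\gamma_0]$ and $T$, so that Gronwall's inequality on $[0,t_0]$ closes the induction.

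I expect the main obstacle to be precisely the bookkeeping in this absorption step: one must check that for every lower-order term the interpolation exponent on the top norm $\vn{k_{s^{l+3}}}_2$ is strictly less than $2$, so that Young's inequality genuinely leaves an absorbable remainder, while the surviving factors involve only derivatives of order at most $l$ and powers of $\vn{k}_\infty$. The fact that every lower-order product carries at least three curvature factors is what forces this exponent below $2$ (the borderline exponent $2$ occurs only for the two-factor term $k_{s^{l+3}}^2$, which is the good term itself); organising the many terms produced by the commutators is cleanest in the $P^a_b$ polynomial notation of \cite{DKS}. Finally, once all the $L^2$ norms are bounded, the stated estimate follows from the Sobolev inequality $\vn{k_{s^l}}_\infty^2 \le C\big(\vn{k_{s^l}}_2\vn{k_{s^{l+1}}}_2 + L^{-1}\vn{k_{s^l}}_2^2\big)$, since both $\vn{k_{s^l}}_2$ and $\vn{k_{s^{l+1}}}_2$ are bounded on $[0,t_0]$.
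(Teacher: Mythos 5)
Your proposal is correct and follows essentially the same route as the paper: the paper likewise derives $\partial_t k_{s^l} = k_{s^{l+6}} + P_3^{4+l}(k) + P_5^{2+l}(k)$ by induction, integrates by parts to produce the good term $-\int_\gamma k_{s^{l+3}}^2\,ds$, and absorbs the cubic and quintic $P$-terms via the interpolation estimates of Proposition 2.5 of \cite{DKS} together with the length and curvature bounds from Lemma \ref{lemlength}, \eqref{standingass} and Lemma \ref{kest}, then integrates in time and upgrades to $L^\infty$ by Sobolev embedding. The only cosmetic difference is that the paper's absorption yields $\frac{d}{dt}\int_\gamma k_{s^l}^2\,ds + \int_\gamma k_{s^{l+3}}^2\,ds \le C$ directly, so plain integration suffices where you invoke Gronwall.
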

\begin{proof}
%{{{
We first need to identify the structure of the evolution for $k_{s^l}$.
Recall the commutator equation
\[
[\partial_t,\partial_s] = k\SK\partial_s
\,.
\]
Recall also the base case,
\[
\partial_t k
 = \SK_{ss} + k^2\SK
\,.
\]
We calculate the additional terms that arise when moving up an order:
\[
\partial_t k_{s^{l+1}} = [\partial_t,\partial_s]k_{s^{l}} + \partial_s(\partial_tk_{s^l})
 = k\SK k_{s^{l+1}} + \partial_s(\partial_tk_{s^l})
\,.
\]
Now
\[
\SK = k_{s^4} + P^2_3(k)\,,
\]
where we use $P_i^j(u)$ to mean a linear combination of terms each consisting
of $j$ arc-length derivatives on $i$ copies of $u$.
Thus we have
\begin{align*}
\partial_t k
 &= (k_{s^6} + P_3^4(k)) + (k^2 k_{s^4} + P^2_5(k))
  = k_{s^6} + P_3^4(k) + P_5^2(k)
\\
\partial_t k_s
 &= k_{s^7} + P_3^5(k) + P_5^3(k) + kk_{s}\SK
  = k_{s^7} + P_3^5(k) + P_5^3(k)
\intertext{and by induction}
\partial_t k_{s^l}
 &=
    k_{s^{l+6}} + P_3^{4+l}(k) + P_5^{2+l}(k)
\end{align*}
The inductive step follows (given the inductive hypothesis) by noticing that
\begin{align*}
 \partial_t k_{s^{l+1}}
&= \partial_s(\partial_tk_{s^l}) + k\SK k_{s^{l+1}}
\\
&= k_{s^{l+7}} + P_3^{5+l}(k) + P_5^{3+l}(k) + (kk_{s^{l+1}}(k_{s^4} + P_3^2(k)))
\\
&= k_{s^{l+7}} + P_3^{5+l}(k) + P_5^{3+l}(k)\,.
\end{align*}
Now we calculate
\begin{align*}
\frac{d}{dt}\int_\gamma (k_{s^l})^2\,ds
 &= 2\int_\gamma
        k_{s^l}(k_{s^{l+6}} + P_3^{4+l}(k) + P_5^{2+l}(k))\,ds
\\&\qquad
   + \int_\gamma k_{s^l}^2k(k_{s^4} + P^2_3(k)))\,ds
\\
 &= -2\int_\gamma
        k_{s^{l+3}}^2\,ds
 + 2\int_\gamma
        k_{s^l}(P_3^{4+l}(k) + P_5^{2+l}(k))\,ds
\,.
\end{align*}
We now apply Proposition 2.5 of Dziuk-Kuwert-Sch\"atzle \cite{DKS} in combination with the length and curvature bounds derived above, yielding the estimates
\[
\int_\gamma k_{s^l}P_3^{4+l}(k)\,ds
\le
	\delta\vn{k_{s^{l+3} }}_2^2 + c(\delta, T, \omega, E[\gamma_0], \vn{k}_\infty)
\]
and
\[
\int_\gamma k_{s^l}P_5^{2+l}(k)\,ds
\le
	\delta\vn{k_{s^{l+3} }}_2^2 + c(\delta, T, \omega, E[\gamma_0], \vn{k}_\infty)
\,.
\]
Choosing $\delta = 1/4$ we find
\begin{align}
\label{theest}
\frac{d}{dt}\int_\gamma k_{s^l}^2\,ds
+ \int_\gamma k_{s^{l+3}}^2\,ds
\le C\,.
\end{align}
This implies that $\vn{k_{s^l}}_2^2$ is uniformly bounded on all bounded time
intervals, yielding the result.
%}}}
\end{proof}

Now we have global existence by a standard argument.

\begin{thm}
\label{TMtinf}
The ideal curve flow $\gamma:\S\times[0,T)\rightarrow\R^2$ with $\gamma_0$ as initial data exists for all time ($T=\infty$).
\end{thm}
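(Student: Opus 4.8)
The plan is to argue by contradiction, using the blowup criterion furnished by the local existence theorem. Suppose $T<\infty$. By Theorem \ref{TMste}, the quantity
\[
Q[\gamma_t] = L[\gamma_t] + \int_\gamma k_{s^5}^2\,ds
\]
must then become unbounded as $t\rightarrow T$. It therefore suffices to show that $Q[\gamma_t]$ stays bounded on $[0,T)$; this contradiction forces $T=\infty$. Everything needed is already in place from the preceding results, so the argument is essentially one of assembly.

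First I would dispose of the case in which the standing hypothesis fails. If at some time $t_0\in[0,T)$ we have $L^3(t_0)<\varepsilon_2/E[\gamma_0]$, then, since $E$ is nonincreasing along the flow, $(L^3E)[\gamma_{t_0}]\le L^3(t_0)E[\gamma_0]<\varepsilon_2$, so the smallness hypothesis of the next section is met at $t_0$ and Theorem \ref{TMmain} yields $T=\infty$ directly (indeed with full convergence). We may therefore assume \eqref{standingass} holds throughout $[0,T)$, which is exactly the regime in which Lemma \ref{lemlength} and Theorem \ref{curvatureestimates} were proved.

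Under \eqref{standingass} I would bound the two summands of $Q$ separately on the finite interval $[0,T)$. For the length term, Lemma \ref{lemlength} gives $L[\gamma_t]\le\big(L[\gamma_0]\exp(E[\gamma_0])\big)e^{c_0t}\le\big(L[\gamma_0]\exp(E[\gamma_0])\big)e^{c_0T}$, which is finite since $T<\infty$. For the derivative term, I would invoke Theorem \ref{curvatureestimates} with $l=5$: the bound $\vn{k_{s^5}}_\infty\le C(5)$ holds with $C(5)$ depending only on $5$, $T$, $\omega$ and $E[\gamma_0]$, all finite for finite $T$. Combining this with the length bound (or reading the $L^2$ bound off \eqref{theest} directly) gives $\int_\gamma k_{s^5}^2\,ds\le C(5)^2\,L[\gamma_t]\le C$. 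Hence $Q[\gamma_t]$ is uniformly bounded on $[0,T)$, contradicting Theorem \ref{TMste}, and we conclude $T=\infty$.

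The genuine difficulty in this global-existence statement has already been resolved upstream, in Theorem \ref{curvatureestimates}, where the troublesome lower-order terms $P_3^{4+l}(k)$ and $P_5^{2+l}(k)$ were absorbed into $\delta\vn{k_{s^{l+3}}}_2^2$ via the Dziuk--Kuwert--Sch\"atzle interpolation inequalities together with the length and $\vn{k}_\infty$ bounds. Consequently I do not expect a substantive obstacle in the present proof; the only point requiring attention is that the constants $C(l)$ depend on $T$, so the reasoning controls $Q$ on each finite time interval rather than uniformly in $t$ — but this local-in-time boundedness is precisely what the blowup criterion of Theorem \ref{TMste} requires to rule out finite-time singularities.
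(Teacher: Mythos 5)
Your proposal is correct, and it shares the paper's scaffolding: the same dichotomy on \eqref{standingass} (with the small-energy case deferred to Theorem \ref{TMmain}, exactly as the paper's own discussion around \eqref{standingass} prescribes), and the same two workhorses, Lemma \ref{lemlength} for length and Theorem \ref{curvatureestimates} for curvature. Where you genuinely diverge is in the closing mechanism. You treat the blowup criterion of Theorem \ref{TMste} as a black box: show $Q[\gamma_t]=L[\gamma_t]+\vn{k_{s^5}}_2^2$ stays bounded on $[0,T)$ (via $\int_\gamma k_{s^5}^2\,ds\le \vn{k_{s^5}}_\infty^2 L$, or directly from \eqref{theest}) and contradict unboundedness. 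The paper's proof of Theorem \ref{TMtinf} instead never invokes the $Q$-criterion: it bootstraps the curvature bounds into uniform control of the parametrisation itself, bounding $v=|\partial_u\gamma|$ above and below from the ODE $\partial_t v=-k\SK v$, then all derivatives $\partial_u^l v$ and $\vn{\partial_s^l\gamma}_\infty$ via \eqref{convertform}, concluding that $\gamma$ extends smoothly to $\S\times[0,T]$ and restarting by short-time existence. Your route is shorter and perfectly legitimate given Theorem \ref{TMste} as stated — it is essentially the contrapositive of the remark following that theorem. What the paper's longer argument buys is self-containedness and strength: bounding the geometric quantity $Q$ alone controls the curve only up to parametrisation, whereas the $v$-estimates give uniform $C^l$ bounds on the fixed parametrisation $u$, which is what the smooth-extension-and-restart step actually uses; the paper thereby avoids relying on the criterion (whose justification is delegated to \cite{CharlieThesis}) at the one place where the whole global existence theorem hangs on it. One small caveat applies equally to you and to the paper: the appeal to Theorem \ref{TMmain} when \eqref{standingass} fails is formally forward-referencing results stated for flows on $[0,\infty)$; in substance this is harmless because the Section \ref{Sstability} estimates are differential inequalities valid on any existence interval, but since you are writing the proof rather than the overview, a sentence acknowledging this would tighten the logic.
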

\begin{proof}
%{{{
Let us suppose $T<\infty$ (and $T$ is maximal).
In this proof we use $C(l)$ to denote a constant only depending on $l$, $\gamma_0$, and $T$.
We use the same symbol $C(l)$ to denote possibly different constants throughout the proof.

The main tool is Theorem \ref{curvatureestimates}.
This implies for all $l\in\N$,
\[
\vn{\partial_s^l\SK}_\infty \le C(l)
\,.
\]
Set $v = |\partial_u\gamma|$ where $u$ is the
initial space parameter before reparametrisation by arc-length.
From the evolution equation $v$ satisfies
\[
\partial_tv = -k\SK v
\]
so that $v$ is uniformly bounded from above and below on any bounded time interval.
We observe that for any function $\phi:\S\rightarrow\R$ we have
\begin{equation}
\label{convertform}
\partial_u^l\phi = v^l\partial_s^l\phi + P^l(v, \ldots, \partial_u^{l-1}v, \phi, \ldots, \partial_s^{l-1}\phi)
\end{equation}
where $P^l$ is a polynomial.
Using this for $\phi = k\SK$, we see that
\[
\vn{\partial_u^l(k\SK)}_\infty \le C(l)
\,.
\]
By differentiating the ODE for $v$ we have for $\psi_l = \partial_u^lv$
\[
\partial_t\psi_l + k\SK\psi_l \le C(l)
\,,
\]
which gives
\[
\vn{\psi_l}_\infty \le C(l)
\,.
\]
The evolution equation $\partial_t\gamma = \SK\nu$, and the equations $|\partial_s\gamma| = 1$, $\partial_s^2\gamma = k\nu$, imply
\[
\vn{\partial_s^l\gamma}_\infty \le C(l)
\]
where $l\in\N_0$.
These uniform estimates imply $\gamma$ extends smoothly to $\S\times[0,T]$, and by short-time existence beyond $T$, contradicting the maximality of $T$.
%}}}
\end{proof}

\begin{proof}[Proof of Theorem \ref{TMglobal}.]
Theorem \ref{TMtinf} is the $T=\infty$ part of Theorem \ref{TMglobal}.
The remaining part of Theorem \ref{TMglobal} that is to be established is the convergence statement under the assumption that length is bounded.

Intuitively, if the length is uniformly bounded, the smallness assumption
required to use the proof of Theorem \ref{TMmain} as-is should be satisfied
after waiting a sufficient amount of time, and then we can apply the proof of Theorem \ref{TMmain} to the flow starting after this waiting time, in order to conclude Theorem \ref{TMglobal}.
Clearly, since $\vn{\SK}_2^2\in L^1([0,\infty))$, we have convergence along a subsequence to an $\omega$-circle, and so there does exist such a waiting time.
To conclude the proof, we give an estimate for this.

Our estimate exploits the rigidity proof from Section \ref{Srigid}.
We claim that
\begin{equation}
\label{stabilityeqn}
E \leq CL^{3/2}\vn{\SK}_2
\,,
\end{equation}
where $C = C(\omega)$.

To prove \eqref{stabilityeqn}, let $M$ and $N$ be as in Section \ref{Srigid}, and set
\[
Q=(M+iN)e^{-i\theta}
\]
where $\theta$ is the angle of the unit tangent.
Then $Q' = {\mathcal K}e^{-i\theta}$.
That gives
\[
M+iN = Q(x'+i y')\,.
\]
Integrating, we find (with $\bar Q$ the average of $Q$)
\begin{align*}
-\frac32i{E}
	&= \int_\gamma (M+iN)\,ds
\\
	&= \int_\gamma(Q-\bar Q)(x'+i y')\,ds+\bar Q\int_\gamma(x'+i y')\,ds
\end{align*}
which implies
\begin{align*}
E	&\leq CL\vn{Q-\bar Q}_\infty
\\
	&\leq CL\int_\gamma |{\mathcal K}|\,ds
\\
	&\leq CL^{3/2}\|{\mathcal K}\|_{2}
\,,
\end{align*}
as required.

The estimate \eqref{stabilityeqn} implies controlled decay of $E$, since (recall length is uniformly bounded here) then
\begin{align*}
\frac{d}{dt}\int_\gamma k_s^2\,ds
 &= -2\int_\gamma |\SK|^2\,ds
\\
 &\le -2CL_0^{-3/2}\bigg(\int_\gamma k_s^2\,ds\bigg)^2\,,
\end{align*}
implying that 
\[
E(t) \le \frac{E[\gamma_0]}{1+2CL_0^{-3/2}E[\gamma_0]\,t}
\,.
\]
Therefore we can estimate the waiting time $t_0$ by (for example)
\[
t_0 \le \frac{L_0^{3/2}}{2CE[\gamma_0]}\bigg(\frac{E[\gamma_0]}{\varepsilon_2}\bigg) = 
        \frac{L_0^{3/2}}{2C\varepsilon_2}
\,.
\]
This finishes the proof.
\end{proof}

%}}}

\section{Stability of \texorpdfstring{$\omega$}{omega}-circles}%{{{
\label{Sstability}
   
Our primary goal in this section is to use the hypothesis
\begin{equation}
\label{smallenergy}
\tag{E}
L^3(t)\vn{k_s}_2^2(t) < \varepsilon
\end{equation}
at $t=0$ to control evolving length.
First, we prove some preparatory estimates.

\begin{lem}
Let $\SP = \int_\gamma k_{s^4}^2\,ds$ and
\[
\SK_0 = k_{s^4} + \bigg(\frac{2\pi\omega}L\bigg)^2k_{ss}
\,.
\]
Then there exists a constant $C_\omega$ depending only on $\omega$ such that
\[
	\int_\gamma \SK_0^2\,ds \ge C_\omega\SP - 4^5\omega^8\pi^8L^{-3}E^2
\,.
\]
\label{LMskp}
\end{lem}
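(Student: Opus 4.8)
The plan is to treat $\SK_0=k_{s^4}+\kav^2k_{ss}$, where $\kav=2\pi\omega/L$ is the average curvature, as a constant-coefficient operator, prove a spectral coercivity estimate $\int_\gamma\SK_0^2\,ds\gtrsim_\omega\SP$ modulo the kernel of its symbol, and then control the single kernel mode using the closedness of $\gamma$. First I would expand the square and integrate by parts twice, using $\int_\gamma k_{s^4}k_{ss}\,ds=-\int_\gamma k_{s^3}^2\,ds$, to obtain
\[
\int_\gamma\SK_0^2\,ds=\SP-2\kav^2\int_\gamma k_{s^3}^2\,ds+\kav^4\int_\gamma k_{ss}^2\,ds.
\]
The negative middle term is the only obstruction to coercivity, and it is concentrated precisely at the frequency $\kav$.

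To make this quantitative I would expand the smooth $L$-periodic curvature as $k=\sum_n c_n e^{i\lambda_n s}$ with $\lambda_n=2\pi n/L$ and apply Parseval. Since $\kav=\lambda_\omega$, the right-hand side above equals $L\sum_n|c_n|^2\lambda_n^4(\lambda_n^2-\lambda_\omega^2)^2$, while $\SP=L\sum_n|c_n|^2\lambda_n^8$. The symbol $\lambda_n^4(\lambda_n^2-\lambda_\omega^2)^2$ vanishes exactly for $n\in\{0,\pm\omega\}$, and for every other $n$,
\[
\lambda_n^4(\lambda_n^2-\lambda_\omega^2)^2=\lambda_n^8\Big(1-\tfrac{\omega^2}{n^2}\Big)^2\ge C_\omega\,\lambda_n^8,\qquad C_\omega:=\min_{n\ne 0,\pm\omega}\Big(1-\tfrac{\omega^2}{n^2}\Big)^2>0,
\]
the minimum being attained at $n=\omega+1$. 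Bounding each term with $n\ne0,\pm\omega$ below by $C_\omega|c_n|^2\lambda_n^8$ and then adding and subtracting the $n=\pm\omega$ contributions (with $|c_{-\omega}|=|c_\omega|$ since $k$ is real) gives
\[
\int_\gamma\SK_0^2\,ds\ \ge\ C_\omega\Big(\SP-2L|c_\omega|^2\lambda_\omega^8\Big),
\]
so that everything reduces to bounding the single coefficient $|c_\omega|$.

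The crux is that $\SK_0$ annihilates this mode, so the bound on $|c_\omega|$ must come entirely from closedness — this is exactly where the argument parallels the rigidity proof of Section~\ref{Srigid}. Writing $\theta(s)=\theta_0+\kav s+\Psi(s)$ for the tangent angle, with $\Psi_s=k-\kav$ and $\Psi$ normalised to have zero mean (harmless, as it affects neither the closure condition nor $c_\omega$), closedness $\int_0^L e^{i\theta}\,ds=0$ becomes $\int_0^L e^{i\kav s}e^{i\Psi}\,ds=0$. Splitting $e^{i\Psi}=1+i\Psi+(e^{i\Psi}-1-i\Psi)$ and integrating against $e^{i\kav s}=e^{i\lambda_\omega s}$: the constant term integrates to zero, the linear term equals $-L\overline{c_\omega}/\lambda_\omega$ (the resonance $\kav=\lambda_\omega$ is precisely what produces $c_\omega$), and the remainder $R$ satisfies $|R|\le\tfrac12\int_0^L\Psi^2\,ds$ by the elementary estimate $|e^{ix}-1-ix|\le\tfrac12x^2$. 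Hence, with no smallness assumption whatsoever,
\[
|c_\omega|=\frac{\lambda_\omega}{L}|R|\le\frac{\lambda_\omega}{2L}\int_0^L\Psi^2\,ds\le\frac{\lambda_\omega}{2L}\Big(\frac{L}{2\pi}\Big)^4\int_0^L k_s^2\,ds,
\]
the last step being two applications of the Poincar\'e--Wirtinger inequality (to $\Psi$ and to $k-\kav$, both mean-zero). Since $\int_0^L k_s^2\,ds=2E$, this yields $|c_\omega|\lesssim_\omega L^2E$, and therefore $2L|c_\omega|^2\lambda_\omega^8\lesssim_\omega L^{-3}E^2$.

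Substituting into the spectral bound gives $\int_\gamma\SK_0^2\,ds\ge C_\omega\SP-c(\omega)L^{-3}E^2$, which is the claimed inequality; the explicit constant follows by tracking the Poincar\'e constants through the above and is not optimised (the stated value $4^5\omega^8\pi^8$ results from one such simplification). I expect the only genuinely delicate point to be this closure step: one must recognise that the obstruction sits at the exact frequency killed by $\SK_0$, so that it is invisible to any integration-by-parts manipulation and is reachable only through the nonlinear closing condition, and the quadratic remainder bound on $e^{i\Psi}$ is precisely what turns it into an error that is \emph{quadratic} in $E$ — the feature the stated inequality records. For comparison, if one is content with an error merely linear in $E$ the closure analysis can be skipped entirely: discarding $\kav^4\int_\gamma k_{ss}^2\,ds\ge0$, interpolating $\int_\gamma k_{s^3}^2\,ds\le\SP^{2/3}(2E)^{1/3}$ and applying Young's inequality already gives $\int_\gamma\SK_0^2\,ds\ge\tfrac12\SP-c\,\omega^6\pi^6L^{-6}E$, so the sharpening to an $E^2$ error is exactly the contribution of closedness.
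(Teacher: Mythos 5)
Your proof is correct, and its skeleton coincides with the paper's: both expand $k$ in Fourier modes, observe that the symbol of $\SK_0$ is $\lambda_p^4(\lambda_p^2-\lambda_\omega^2)^2$ with $\lambda_p = 2\pi p/L$, extract the same spectral-gap constant $C_\omega$ from the modes $p\ne 0,\pm\omega$, and reduce the lemma to bounding the resonant coefficient at $p=\pm\omega$ by $O(L^2E)$ using closedness. Where you genuinely diverge is in how that resonant bound is obtained. The paper estimates $a_{\pm\omega}=\frac1L\int_\gamma (k-\kav)e^{\pm i\lambda_\omega s}\,ds$ by comparing $e^{i\lambda_\omega s}$ with $e^{i\theta}$ via the Lipschitz bound $|e^{i\lambda_\omega s}-e^{i\theta}|\le|\lambda_\omega s-\theta|\le L\int_\gamma|k_s|\,ds$, after killing the leading term with the identity $\int_\gamma(k-\kav)e^{\pm i\theta}\,ds=\int_\gamma\theta_s e^{\pm i\theta}\,ds-\kav\int_\gamma\tau\,ds=0$ (periodicity plus closedness); you instead expand the closure condition $\int_\gamma e^{i\theta}\,ds=0$ itself to second order in the phase deviation $\Psi$, read off $c_\omega$ \emph{exactly} from the linear term via the resonance $\kav=\lambda_\omega$, and bound the remainder with $|e^{ix}-1-ix|\le\tfrac12x^2$ and two Poincar\'e--Wirtinger inequalities. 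Both are quadratic-in-$E$ closedness arguments, and yours is arguably more transparent about why the error is quadratic: the linear-in-$\Psi$ part of closedness computes the resonant mode rather than being estimated away. One quantitative caveat: inverting the linear term costs you a factor $\lambda_\omega$, so you get $|c_\omega|\le\frac{\omega}{8\pi^3}L^2E$ and an error constant of order $\omega^{10}$, versus the paper's $|a_{\pm\omega}|\le 2L^2E$ and $4^5\omega^8\pi^8$; for large winding number (roughly $\omega>350$) your argument does not literally reproduce the stated constant, though it is sharper for moderate $\omega$, and the discrepancy is immaterial downstream since Proposition \ref{propcoolest} absorbs everything into a generic $C(\omega)$. (Like the paper, you also implicitly take $\omega\ne0$ — you divide by $\lambda_\omega$ — the case $\omega=0$ being trivial since then $\SK_0=k_{s^4}$ and the claim holds with $C_\omega=1$ and no error term.) Your closing aside, that discarding the closure analysis and using only interpolation plus Young yields an error merely linear in $E$, is also correct.
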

\begin{proof}%{{{
Consider the Fourier series for $k$:
\[
k = \sum_p a_p\exp\bigg(i\frac{2\pi}Lps\bigg)
\,.
\]
Then
\begin{align*}
\SK_0 &= \sum_p a_p\bigg[\bigg(\frac{4\pi^2}{L^2}p^2\bigg)^2 - \frac{4\pi^2}{L^2}\omega^2\frac{4\pi^2}{L^2}p^2\bigg]\exp\bigg(i\frac{2\pi}Lps\bigg)
\\
&= \sum_p a_p\bigg(\frac{4\pi^2}{L^2}\bigg)^2p^2(p^2-\omega^2)\exp\bigg(i\frac{2\pi}Lps\bigg)
\,.
\end{align*}
This implies
\begin{align*}
\int_\gamma \SK_0^2\,ds &= \sum_p|a_p|^2\bigg(\frac{4\pi^2}{L^2}\bigg)^4p^4(p^2-\omega^2)^2L
\,.
\end{align*}
We calculate 
\[
a_{\pm \omega} = \frac1{{L}}\int_\gamma k\exp\bigg(\pm i\frac{2\pi}{L}\omega s\bigg)\,ds
          = \frac1{{L}}\int_\gamma \bigg(k-\frac{2\pi\omega}{L}\bigg) \exp\bigg(\pm i\frac{2\pi}{L}\omega s\bigg)\,ds
\,.
\]
This implies
\begin{align*}
|a_{\pm \omega}|
 &\le \frac1{{L}}\int_\gamma \bigg|k-\frac{2\pi\omega}{L}\bigg|\,\bigg| \exp\bigg(\pm i\frac{2\pi}{L}\omega s\bigg) - \exp\bigg(\pm i\theta\bigg) \bigg|\,ds
 \\&\qquad   + \frac1{{L}}\bigg|\int_\gamma \bigg(k-\frac{2\pi\omega}{L}\bigg) \exp(\pm i\theta)\,ds\bigg|
\,.
\end{align*}
In the above we have again used $\theta$ to denote the tangential angle.

Noting that
\[
\int_\gamma \bigg(k-\frac{2\pi\omega}{L}\bigg) \exp(\pm i\theta)\,ds
= \int_\gamma \theta_s \exp(\pm i\theta)\,ds - \frac{2\pi\omega}{L}\int_\gamma \tau\,ds
= 0
\]
and
\begin{align*}
\bigg| \frac{2\pi}{L}\omega s - \theta\bigg|
 &\le \int_0^s\bigg|
		\frac{d}{ds}\bigg(\frac{2\pi}{L}\omega s - \theta\bigg)
	\bigg|\,ds
\\&\le \int_0^s\bigg|\frac{2\pi}{L}\omega - k
	\bigg|\,ds
 \le L\int_\gamma |k_s|\,ds
 \le \sqrt2 L^\frac32 E^\frac12
\end{align*}
we obtain
\begin{align*}
|a_{\pm \omega}| 
 &\le \frac1{{L}}\int_\gamma \bigg|k-\frac{2\pi\omega}{L}\bigg|\,\bigg| \exp\bigg(\pm i\frac{2\pi}{L}\omega s\bigg) - \exp\bigg(\pm i\theta\bigg) \bigg|\,ds
\\
 &\le \frac1{{L}}\int_\gamma |k_s|\,ds\int_\gamma \bigg| \bigg(\pm i\frac{2\pi}{L}\omega s\bigg) - \bigg(\pm i\theta\bigg) \bigg|\,ds
\\
 &\le L\bigg(\int_\gamma |k_s|\,ds\bigg)^2
\\
 &\le 2L^2E
\,.
\end{align*}
Now
\[
\SP = \sum_p \bigg(\frac{4\pi^2}{L^2}\bigg)^4p^8|a_p|^2L\,,
\]
so we have
\begin{align*}
\int_\gamma \SK_0^2\,ds
 &= \sum_p|a_p|^2\bigg(\frac{4\pi^2}{L^2}\bigg)^4p^4(p^2-\omega^2)^2L
\\
 &= \sum_{|p|\ne \omega}|a_p|^2\bigg(\frac{4\pi^2}{L^2}\bigg)^4p^4(p^2-\omega^2)^2L
\\
 &\ge \sum_{|p|\ne \omega,0}|a_p|^2\bigg(\frac{4\pi^2}{L^2}\bigg)^4p^8\bigg(1-\frac{\omega^2}{p^2}\bigg)^2L
\,.
\end{align*}
We define $C_\omega$ by
\[
\bigg(1-\frac{\omega^2}{p^2}\bigg)^2 \ge \min\bigg\{
		\bigg(1-\frac{\omega^2}{(\omega-1)^2}\bigg)^2,
		\bigg(1-\frac{\omega^2}{(\omega+1)^2}\bigg)^2 
	\bigg\}
:= C_\omega
\,.
\]
Then
\begin{align*}
\int_\gamma \SK_0^2\,ds
 &\ge C_\omega \SP - \bigg(\frac{4\pi^2}{L^2}\bigg)^4\omega^8\bigg(|a_{+\omega}|^2 + |a_{-\omega}|^2\bigg)L
\\
 &\ge C_\omega \SP - 4\omega^8\bigg(\frac{4\pi^2}{L^2}\bigg)^4L^5E^2
\\
 &\ge C_\omega \SP - 4^5\omega^8\pi^8L^{-3}E^2\,,
\end{align*}
as required.
\end{proof}%}}}

\begin{prop}
\label{propcoolest}
Let $\gamma_0:\S\rightarrow\R^2$ be a smooth immersed curve.
Then there exist universal constants $\hat C_\omega$ and $\varepsilon_0 > 0$ depending only on $\omega$ such that
\[
	(L^3E)[\gamma] < \varepsilon_0 \quad\Longrightarrow\quad	\int_\gamma \SK^2\,ds \ge \hat C_\omega L^{-6}E
\,.
\]
\end{prop}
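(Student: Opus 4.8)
The plan is to recognise $\SK_0$ as the linearisation of $\SK$ at the round $\omega$-circle and to show that, under the smallness hypothesis, the full operator is $L^2$-close to its linearisation. Writing $\kav = 2\pi\omega/L$ for the average curvature, the difference is
\[
\SK - \SK_0 = (k^2-\kav^2)k_{ss} - \tfrac12 k_s^2 k =: R,
\]
and both surviving terms carry a factor that is small under hypothesis (E): the first the curvature oscillation $k^2-\kav^2$, the second the factor $k_s^2$. Accordingly I would (i) bound $\int_\gamma\SK^2\,ds$ below by a multiple of $\SP$ via Lemma \ref{LMskp}, (ii) absorb $\int_\gamma R^2\,ds$ into that multiple of $\SP$, and (iii) convert $\SP$ into $L^{-6}E$ by a Poincar\'e-type inequality.

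For the first and third moves, I would use the elementary inequality $(a+b)^2\ge\tfrac12 a^2 - b^2$ with $a=\SK_0$, $b=R$ to obtain $\int_\gamma\SK^2\,ds \ge \tfrac12\int_\gamma\SK_0^2\,ds - \int_\gamma R^2\,ds$, and then invoke Lemma \ref{LMskp} to replace $\int_\gamma\SK_0^2\,ds$ by $C_\omega\SP - 4^5\omega^8\pi^8 L^{-3}E^2$. The error term there is harmless, since $L^{-3}E^2 = (L^3E)\,L^{-6}E < \varepsilon_0\,L^{-6}E$ under (E). The conversion of $\SP$ into $L^{-6}E$ is a Fourier/Wirtinger computation in the same spirit as Lemma \ref{LMskp}: comparing modes termwise, and noting that the zero mode drops out of both $\SP$ and $E$, one obtains the unconditional estimate $\SP \ge 2(2\pi)^6 L^{-6}E$.

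The crux, and the step I expect to be the main obstacle, is the absorption estimate for $\int_\gamma R^2\,ds$. I would first extract sup-bounds from the smallness hypothesis: Lemma \ref{kest} gives $\vn{k}_\infty \le \bar c_\omega L^{-1}$ with $\bar c_\omega = 2\omega\pi + \sqrt{2\varepsilon_0}$, while the zero-average identities $\int_\gamma k_s\,ds = 0 = \int_\gamma(k-\kav)\,ds$ together with H\"older's inequality give $\vn{k-\kav}_\infty \le \sqrt{2LE} < \sqrt{2\varepsilon_0}\,L^{-1}$, so that $\vn{k^2-\kav^2}_\infty$ carries a factor $\sqrt{\varepsilon_0}$. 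Using $R^2 \le 2(k^2-\kav^2)^2 k_{ss}^2 + \tfrac12 k_s^4 k^2$ and pulling out these sup-bounds, it remains to control the intermediate quantities $\int_\gamma k_{ss}^2\,ds$ and $\int_\gamma k_s^4\,ds$; both can be traded, via standard interpolation inequalities (cf.\ Proposition 2.5 of \cite{DKS}), for a small multiple of $\SP$ plus terms of lower order in $E$. The delicate point is the scaling bookkeeping: each resulting coefficient must be organised as either a tunable interpolation parameter $\delta$ or a power of $\varepsilon_0$, and one must check that the accompanying powers of $L$ are exactly those making every inequality scale-invariant (both sides of the claim scale like $L^{-9}$). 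The anticipated outcome is a bound of the form $\int_\gamma R^2\,ds \le \delta'\,\SP + C(\delta',\omega)\,\varepsilon_0\,L^{-6}E$ with $\delta'$ at our disposal.

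Finally I would choose $\delta'$ so small that $\delta' \le \tfrac14 C_\omega$, and then fix $\varepsilon_0 = \varepsilon_0(\omega)$ small enough that all the $\varepsilon_0 L^{-6}E$ errors are dominated by the main term, giving
\[
\int_\gamma\SK^2\,ds \ge \tfrac12 C_\omega\SP - \tfrac12\cdot 4^5\omega^8\pi^8 L^{-3}E^2 - \int_\gamma R^2\,ds \ge \tfrac14 C_\omega\SP - C\varepsilon_0 L^{-6}E \ge \hat C_\omega L^{-6}E,
\]
where the last step uses $\SP \ge 2(2\pi)^6 L^{-6}E$ and $\hat C_\omega$ depends only on $\omega$, as required.
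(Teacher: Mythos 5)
Your proposal is correct and follows essentially the same route as the paper's proof: the identical decomposition $\int_\gamma\SK^2\,ds \ge \tfrac12\int_\gamma\SK_0^2\,ds - \int_\gamma(\SK-\SK_0)^2\,ds$ combined with Lemma \ref{LMskp}, sup-bounds on $k$ and $k-\kav$ from Lemma \ref{kest} and the smallness hypothesis, Gagliardo--Nirenberg interpolation to absorb $\int_\gamma k_{ss}^2\,ds$ and $\int_\gamma k_s^4\,ds$ into a small multiple of $\SP$ plus lower-order powers of $L^3E$, and finally a Wirtinger-type bound $\SP \gtrsim L^{-6}E$ (your explicit constant $2(2\pi)^6$ checks out) with $\varepsilon_0$ chosen small. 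The paper organises the endgame slightly differently, collecting everything into a polynomial inequality $L^9\int_\gamma\SK^2\,ds \ge a(L^3E) - b(L^3E)^2 - \cdots - f(L^3E)^5$ before invoking smallness, but this is the same argument in substance.
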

\begin{proof}%{{{
Using Lemma \ref{LMskp}:
\begin{align}
	\int_\gamma \SK^2\,ds
	&= \int_\gamma \SK_0^2\,ds
		+ 2\int_\gamma \SK_0(\SK-\SK_0)\,ds
		+ \int_\gamma (\SK-\SK_0)^2\,ds
	\notag	\\
	&\ge \frac12\int_\gamma \SK_0^2\,ds - \int_\gamma (\SK-\SK_0)^2\,ds
	\notag	\\
	&\ge 
	     \frac{C_\omega}{2}\SP - 2^9\omega^8\pi^8L^{-3}E^2
		- \int_\gamma (\SK-\SK_0)^2\,ds
		\,.
\label{esti2}
\end{align}
Now $\SK - \SK_0 = (k^2 - (\frac{2\pi\omega}{L})^2)k_{ss} - \frac12kk_s^2$, so
\begin{align*}
	\int_\gamma (\SK-\SK_0)^2\,ds
	&\le 2\int_\gamma \bigg(k^2 - \bigg(\frac{2\pi\omega}{L}\bigg)^2\bigg)^2k_{ss}^2\,ds
	 + \frac12\int_\gamma k^2k_s^4\,ds
	 \,.
\end{align*}
Now the curvature bound (Lemma \ref{kest}) yields
\begin{align}
	\int_\gamma (\SK-\SK_0)^2\,ds
	&\le 2\bigg(\bigg(\sqrt{2LE} + \frac{2\omega\pi}{L}\bigg) + \bigg(\frac{2\omega\pi}{L}\bigg)\bigg)^2\bigg(\int_\gamma |k_s|\,ds\bigg)^2\int_\gamma k_{ss}^2\,ds
\notag\\&\qquad
	 + \frac12\bigg(\sqrt{2LE} + \frac{2\omega\pi}{L}\bigg)^2\int_\gamma k_s^4\,ds
\notag\\
	&\le 4\sqrt2\Big(\sqrt{2L^3E} + 4\omega\pi\Big)^2\frac{E}{L}\int_\gamma k_{ss}^2\,ds
\notag\\&\qquad
	 + \frac12\Big(\sqrt{2L^3E} + 2\omega\pi\Big)^2L^{-2}\int_\gamma k_s^4\,ds
	 \,.
\label{estimate}
\end{align}
Now the Gagliardo-Nirenberg Sobolev inequality yields universal constants $C_3, C_4$ such that the inequalities
\begin{align}
C_1\int_\gamma k_{ss}^2\,ds &\le C_1C_3\bigg(\int_\gamma k_s^2\,ds\bigg)^\frac23
				\bigg(\int_\gamma k_{s^4}^2\,ds\bigg)^\frac13
			\le C_1C_3(2E)^\frac23\SP^\frac13
\notag\\
			&\le \frac{LC_\omega}{8E}\SP + C(\omega)C_1^\frac32C_3^\frac32L^{-\frac{1}{2}}E^\frac32
\,,\text{ and}
\label{estembedded}\\
C_2\int_\gamma k_s^4\,ds &\le C_2C_4L^2\bigg(\int_\gamma k_s^2\,ds\bigg)^\frac32
				\bigg(\int_\gamma k_{s^4}^2\,ds\bigg)^\frac12
			\le C_2C_4L^2(2E)^\frac32\SP^\frac12
\notag\\
			&\le \frac{L^2C_\omega}{8}\SP + C(\omega)C_2^2C_4^2 E^3L^{2}
\notag
\end{align}
hold.
Using these with $C_1 =  4\sqrt2\Big(\sqrt{2L^3E} + 4\omega\pi\Big)^2 $
and $C_2 =\frac12\Big(\sqrt{2L^3E} + 2\omega\pi\Big)^2$
in combination with \eqref{estimate} above yields
\begin{align*}
\int_\gamma (\SK-\SK_0)^2\,ds &\le
		\frac{C_\omega}{4}\SP
		+ C(\omega)\Big[ ((L^3E)^\frac32 + \omega^3) (L^{-\frac32}E^\frac52) + ((L^3E)^2 + \omega^4) E^3\Big]
\end{align*}
where $C(\omega)$ is a constant depending only on $\omega$.
Plugging this into \eqref{esti2} we find
\begin{align*}
	\int_\gamma \SK^2\,ds
	&\ge 
	     \frac{C_\omega}{2}\SP - 2^9\omega^8\pi^8L^{-3}E^2
		- \int_\gamma (\SK-\SK_0)^2\,ds
\\
	&\ge 
	     \frac{C_\omega}{4}\SP
	     - C(\omega)\Big[ L^{-3}E^2 +  ((L^3E)^\frac32 + \omega^3) (L^{-\frac32}E^\frac52) + ((L^3E)^2 + \omega^4) E^3\Big]
\\
	&\ge \frac{C_\omega}{4}\Big(\frac{4\omega^2\pi^2}{L^2}\Big)^3 E
		- C(\omega)\Big[  L^{-3}E^2 + ((L^3E)^\frac32 + \omega^3) (L^{-\frac32}E^\frac52) + ((L^3E)^2 + \omega^4) E^3\Big]
	\,.
\end{align*}
This implies
\[
L^9\int_\gamma \SK^2\,ds
\ge a(L^3E)
- b(L^3E)^2 
- c(L^3E)^\frac52 
- d(L^3E)^3 
- e(L^3E)^4
- f(L^3E)^5
\]
where $a,b,c,d,e,f$ are universal constants that depend only on $\omega$.
Therefore, for $L^3E$ small enough (depending only on $\omega$), we have
\[
L^9\int_\gamma \SK^2\,ds
\ge \frac{a}{2}L^3E\,,
\]
as required.

\end{proof}%}}}

The first consequence of Proposition \ref{propcoolest} is preservation and exponential improvement of the scale-invariant smallness condition.

\begin{prop}
\label{proppres}
There exist absolute constants $\varepsilon_1, \varepsilon_2, C_0$ depending only on $\omega$ (with $0 < \varepsilon_2 \le \varepsilon_1<\varepsilon_0$, $C_0>0$) such that the following statements hold.
Let $\gamma_0:\S\rightarrow\R^2$ be a smooth immersed curve satisfying
\[
(L^3E)[\gamma_0] < \varepsilon_1
\]
and $\gamma:\S\times[0,\infty)\rightarrow\R^2$ be the ideal curve flow with $\gamma_0$ as initial data.
Then
\[
	(L^3E)[\gamma_t] \le (L^3E)[\gamma_0]\,.
\]
Furthermore, if $(L^3E)[\gamma_0] < \varepsilon_2$ then for any $t\in[0,\infty)$ we have the estimate
\[
	\int_0^t L^3\vn{\SK}_2^2\,d\tau \le C_0\Big[(L^3E)[\gamma_0] - (L^3E)[\gamma_t]\Big]\,.
\]
\end{prop}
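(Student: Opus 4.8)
The plan is to differentiate the scale-invariant energy $(L^3E)[\gamma_t]$ directly and show that, once $(L^3E)$ is small, its time derivative is controlled by $-L^3\vn{\SK}_2^2$. Writing $E=\tfrac12\vn{k_s}_2^2$ and recalling the gradient-flow identities $\frac{d}{dt}E=-\vn{\SK}_2^2$ and $\frac{d}{dt}L=-\int_\gamma k\SK\,ds$, I would first compute
\[
\frac{d}{dt}(L^3E)=3L^2E\,\frac{dL}{dt}+L^3\frac{dE}{dt}=-3L^2E\int_\gamma k\SK\,ds-L^3\vn{\SK}_2^2\,.
\]
The second term has a good sign; the whole difficulty lies in the cross term $-3L^2E\int_\gamma k\SK\,ds$, which has indefinite sign and must be absorbed into $L^3\vn{\SK}_2^2$.

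Next I would estimate the cross term. By Cauchy--Schwarz $\big|\int_\gamma k\SK\,ds\big|\le\vn{k}_2\vn{\SK}_2$, and Lemma \ref{kest} together with $\vn{k_s}_2^2=2E$ gives $\vn{k}_2\le L^{-1/2}\big(\sqrt{2L^3E}+2\omega\pi\big)$, so that
\[
\Big|3L^2E\int_\gamma k\SK\,ds\Big|\le 3L^{3/2}E\big(\sqrt{2L^3E}+2\omega\pi\big)\vn{\SK}_2\,.
\]
The key is to compare this with $L^3\vn{\SK}_2^2$: after dividing by $\vn{\SK}_2$ it suffices to bound $3L^{3/2}E\big(\sqrt{2L^3E}+2\omega\pi\big)$ against $\lambda L^3\vn{\SK}_2$, and here I would invoke Proposition \ref{propcoolest}, which under $(L^3E)<\varepsilon_0$ yields the coercivity lower bound $\vn{\SK}_2\ge\sqrt{\hat C_\omega}\,L^{-3}E^{1/2}$. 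Substituting this and cancelling $E^{1/2}$ reduces everything to the scalar inequality $3\sqrt 2\,(L^3E)+6\omega\pi\sqrt{L^3E}\le\lambda\sqrt{\hat C_\omega}$, whose left side is small when $(L^3E)$ is small.

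For part (a), I would choose $\varepsilon_1\le\varepsilon_0$ small enough (depending only on $\omega$) that this scalar inequality holds with $\lambda=1$ whenever $L^3E<\varepsilon_1$; this makes the cross term no larger than $L^3\vn{\SK}_2^2$, hence $\frac{d}{dt}(L^3E)\le 0$ on any interval where $L^3E<\varepsilon_1$. Since the flow exists on $[0,\infty)$ by Theorem \ref{TMtinf}, a standard continuity argument then propagates $(L^3E)[\gamma_t]<\varepsilon_1$ for all time and yields the monotonicity $(L^3E)[\gamma_t]\le(L^3E)[\gamma_0]$. For the integral estimate I would take a smaller $\varepsilon_2\le\varepsilon_1$ forcing $\lambda=\tfrac12$, so that the cross term is at most $\tfrac12 L^3\vn{\SK}_2^2$ and therefore
\[
-\frac{d}{dt}(L^3E)\ge\tfrac12 L^3\vn{\SK}_2^2\,.
\]
Integrating from $0$ to $t$ gives the claim with $C_0=2$. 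The main obstacle is precisely the indefinite cross term generated by the motion of length: only the scale-invariant coercivity of Proposition \ref{propcoolest} turns the Cauchy--Schwarz bound into an absorbable quantity, and only the residual factor $\sqrt{L^3E}$ closes the argument, which is what forces the smallness hypothesis. The points requiring care are verifying that $\varepsilon_1,\varepsilon_2$ can be chosen depending only on $\omega$, and that the continuity/bootstrap step legitimately preserves smallness on all of $[0,\infty)$.
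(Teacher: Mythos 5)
Your proposal is correct, and its skeleton coincides with the paper's: differentiate $(L^3E)$, keep the good term $-L^3\vn{\SK}_2^2$, absorb the indefinite contribution coming from $L'$ using Lemma \ref{kest} together with the coercivity of Proposition \ref{propcoolest}, choose $\varepsilon_1$ for monotonicity and a smaller $\varepsilon_2$ for the integral estimate (the paper likewise gets $C_0$ from halving the good term), and propagate smallness by the maximal-interval continuity argument, which is legitimate exactly as you say since $T=\infty$ is already known. The one genuine difference is the treatment of the cross term: the paper first integrates by parts, $\int_\gamma k\SK\,ds = \vn{k_{ss}}_2^2 - \tfrac72\vn{kk_s}_2^2$, discards the favourable $-\vn{k_{ss}}_2^2$, bounds $\vn{kk_s}_2^2 \le \vn{k}_\infty^2\,\vn{k_s}_2^2$ and invokes Proposition \ref{propcoolest} in its quadratic form $2E \le \tfrac{2}{\hat C_\omega}L^6\vn{\SK}_2^2$, whereas you leave $\int_\gamma k\SK\,ds$ untouched, apply Cauchy--Schwarz, and use the square-root form $\vn{\SK}_2 \ge \sqrt{\hat C_\omega}\,L^{-3}E^{1/2}$. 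Both routes collapse to a scalar smallness condition on $L^3E$ depending only on $\omega$ --- your $3\sqrt2\,(L^3E)+6\omega\pi\sqrt{L^3E}\le\lambda\sqrt{\hat C_\omega}$ against the paper's $\tfrac{21}{\hat C_\omega}(L^3E)(\sqrt{2\varepsilon_0}+2\omega\pi)^2<1$ --- so the resulting thresholds are comparable. What each buys: your version is more self-contained for this proposition, treating $\SK$ as a black box with no integration-by-parts identity; the paper's identity $L' = -\vn{k_{ss}}_2^2 + \tfrac72\vn{kk_s}_2^2$ is not wasted work, since it is reused verbatim in Proposition \ref{lengthestimate} and Lemma \ref{lengthfrombelow}. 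One pedantic point to add for completeness: your divisions by $\vn{\SK}_2$ and by $E^{1/2}$ require the (trivial) observation that the desired absorption inequality holds automatically when either quantity vanishes, since the cross term then vanishes too.
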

\begin{proof}%{{{
We calculate and estimate to find
\begin{align*}
(L^3E)' &= 
-L^3\int_\gamma \SK^2\,ds
 + 3(L^3E)\frac{(-\vn{k_{ss}}_2^2 + \frac72\vn{kk_s}_2^2)}{L}
\\
&\le 
-L^3\int_\gamma \SK^2\,ds
+ \frac{21}{2}(L^3E)L^{-1}\vn{kk_s}_2^2
\,.
\end{align*}
First, since $\varepsilon_1 < \varepsilon_0$, the hypothesis of Proposition \ref{propcoolest} holds for a maximal time interval $[0,\delta)$.
The curvature estimate (Lemma \ref{kest}) and Proposition \ref{propcoolest} imply that on this time interval
\[
\vn{kk_s}_2^2
\le L^{-2}(\sqrt{2L^3E} + 2\omega\pi)^2\bigg(\frac{2}{\hat C_\omega}L^6\int_\gamma \SK^2\,ds\bigg)
\,.
\]
Combining with our first estimate, we find
\begin{align*}
(L^3E)'
&\le 
L^3\int_\gamma \SK^2\,ds\Big(-1
+ \frac{21}{\hat C_\omega}(L^3E)L^{-3}(\sqrt{2L^3E} + 2\omega\pi)^2L^3\Big)
\\
&\le
L^3\int_\gamma \SK^2\,ds\Big(-1
+ \frac{21}{\hat C_\omega}(L^3E)(\sqrt{2\varepsilon_0} + 2\omega\pi)^2\Big)
\,.
\end{align*}
Therefore by assuming that
\[
(L^3E)(0) < \min\bigg\{\varepsilon_0,
 \frac{\hat C_\omega}{21(\sqrt{2\varepsilon_0} + 2\omega\pi)^2)}
\bigg\} := \varepsilon_1
\]
we see that $(L^3E)' \le 0$, showing that $\delta = T$ and preserving the hypothesis $(L^3E) < \varepsilon_1$
for all time.

To see the second statement, take $(L^3E)(0) < \min\{\varepsilon_0,1/2C\} := \varepsilon_2$, and apply Proposition \ref{propcoolest} one more time, to see
\begin{align*}
(L^3E)'
&\le 
 - \frac{C}{2}L^3\int_\gamma \SK^2\,ds
\le -\frac1{C_0} L^3\vn{\SK}_2^2
\end{align*}
implying the result by integration.
\end{proof}%}}}

We are now able to use Proposition \ref{proppres} to establish an a-priori estimate on length, the crucial ingredient needed to obtain our convergence result.

\begin{prop}
\label{lengthestimate}
Let $\gamma_0:\S\rightarrow\R^2$ be a smooth immersed curve satisfying
\[
(L^3E)[\gamma_0] < \varepsilon_2
\]
where $\varepsilon_2$ is as in Proposition \ref{proppres},
and $\gamma:\S\times[0,\infty)\rightarrow\R^2$ be the ideal curve flow with $\gamma_0$ as initial data.
Then
\[
L[\gamma_t] \le L[\gamma_0]\exp\Big(C_1(L^3E)[\gamma_0]\Big)\,,
\]
where $C_1$ is an absolute constant depending only on $\omega$.
\end{prop}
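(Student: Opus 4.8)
The plan is to bound $\frac{d}{dt}\log L$ by an exact time derivative and integrate, exploiting that the scale-invariant energy stays small along the entire flow so that every estimate of this section applies for all $t$. First I would differentiate length. Since $\frac{d}{dt}L=-\int_\gamma k\SK\,ds$, the Cauchy--Schwarz inequality together with the curvature bound of Lemma \ref{kest} gives
\[
\frac{d}{dt}L\le\vn{k}_2\vn{\SK}_2\le L^{-1/2}\Big(\sqrt{2L^3E}+2\omega\pi\Big)\vn{\SK}_2.
\]
By the first part of Proposition \ref{proppres} the hypothesis $(L^3E)[\gamma_0]<\varepsilon_2$ is preserved, so $L^3E\le\varepsilon_2$ for all time and the bracket is bounded by a constant $A_\omega$ depending only on $\omega$; dividing by $L$ yields
\[
\frac{d}{dt}\log L\le A_\omega L^{-3/2}\vn{\SK}_2.
\]

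The difficulty is that this right-hand side is not manifestly integrable in time: there is no a priori lower bound on $L$ (indeed $L$ may grow while $E$ decays as the curve rounds out), so a direct Cauchy--Schwarz in time against the dissipation integral $\int_0^t L^3\vn{\SK}_2^2\,d\tau$ of Proposition \ref{proppres} leaves an uncontrolled factor $\int_0^t L^{-6}\,d\tau$. I would circumvent this by trading one factor of $\vn{\SK}_2$ against the coercivity estimate of Proposition \ref{propcoolest}, which applies throughout since $L^3E<\varepsilon_0$, and which reads $\vn{\SK}_2^2\ge\hat C_\omega L^{-6}E=\hat C_\omega L^{-9}(L^3E)$, i.e.\ $\vn{\SK}_2\ge\sqrt{\hat C_\omega}\,L^{-9/2}\sqrt{L^3E}$. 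Writing $L^{-3/2}\vn{\SK}_2=L^{-3/2}\vn{\SK}_2^2/\vn{\SK}_2$ and inserting this lower bound in the denominator converts the estimate into
\[
\frac{d}{dt}\log L\le\frac{A_\omega}{\sqrt{\hat C_\omega}}\,\frac{L^3\vn{\SK}_2^2}{\sqrt{L^3E}}.
\]

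Finally I would feed in the differential form $L^3\vn{\SK}_2^2\le-C_0(L^3E)'$ of the dissipation bound from the second part of Proposition \ref{proppres}, giving
\[
\frac{d}{dt}\log L\le-\frac{A_\omega C_0}{\sqrt{\hat C_\omega}}\,\frac{(L^3E)'}{\sqrt{L^3E}}=-\frac{2A_\omega C_0}{\sqrt{\hat C_\omega}}\,\frac{d}{dt}\sqrt{L^3E}.
\]
The key point is that the right-hand side is now an exact time derivative of the monotone, bounded quantity $\sqrt{L^3E}$. Integrating over $[0,t]$ and discarding the favourable endpoint term gives $\log\big(L[\gamma_t]/L[\gamma_0]\big)\le C_1\sqrt{(L^3E)[\gamma_0]}$, with $C_1=2A_\omega C_0/\sqrt{\hat C_\omega}$ depending only on $\omega$, which yields the claimed uniform-in-time exponential length bound. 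I expect the main obstacle to be precisely the elimination of the negative power of $L$: it is essential to recast the integrand as a derivative of $\sqrt{L^3E}$ using coercivity, rather than to attempt a direct time-integration, which fails for want of any lower bound on length.
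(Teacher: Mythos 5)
Your argument is essentially sound as a mechanism for a uniform-in-time length bound, but it is a genuinely different route from the paper's, and it proves a quantitatively weaker inequality than the one stated. The paper does not apply Cauchy--Schwarz to $-\int_\gamma k\SK\,ds$ at all: it first integrates by parts to obtain the exact identity
\[
(\log L)' = -\frac{\vn{k_{ss}}_2^2}{L} + \frac{7}{2L}\vn{kk_s}_2^2\,,
\]
discards the good term, and bounds $\frac{7}{2L}\vn{kk_s}_2^2 \le C(\omega)L^{-3}E$ via Lemma \ref{kest} together with the preserved smallness $L^3E<\varepsilon_2$. This bound is \emph{quadratic} in the small quantities ($\vn{k_s}_2^2$ appears, not $\vn{k_s}_2$), so a single application of the coercivity estimate of Proposition \ref{propcoolest} converts it into $(\log L)' \le C(\omega)L^3\vn{\SK}_2^2$, and the integral dissipation bound of Proposition \ref{proppres} then yields $\log\big(L[\gamma_t]/L[\gamma_0]\big) \le C(\omega)C_0(L^3E)[\gamma_0]$, linear in the initial scale-invariant energy, exactly as stated. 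Your route retains only one factor of $\vn{\SK}_2$ after Cauchy--Schwarz; the trick of dividing by the coercivity lower bound $\vn{\SK}_2 \ge \sqrt{\hat C_\omega}\,L^{-9/2}\sqrt{L^3E}$ so as to recast the right-hand side as the exact derivative $-\tfrac{d}{dt}\sqrt{L^3E}$ is correct and rather elegant, but half of the available smallness has already been spent, so integration can only give $\log\big(L[\gamma_t]/L[\gamma_0]\big)\le C_1\sqrt{(L^3E)[\gamma_0]}$.

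That square root is a genuine shortfall relative to the statement: for small $x$ one has $\sqrt{x}\gg x$, so $\exp\big(C_1\sqrt{(L^3E)[\gamma_0]}\big)$ does not imply the claimed bound $\exp\big(C_1(L^3E)[\gamma_0]\big)$ with $C_1 = C_1(\omega)$. Every downstream use in the paper needs only the uniform upper bound on length, so your version would serve those purposes, but as a proof of Proposition \ref{lengthestimate} as stated it is incomplete; replacing Cauchy--Schwarz by the integration-by-parts identity above repairs it immediately and in fact simplifies the argument. Two smaller points you should make explicit: (i) the second part of Proposition \ref{proppres} is stated in integrated form, so the pointwise inequality $L^3\vn{\SK}_2^2 \le -C_0(L^3E)'$ needs justification --- either by quoting the differential inequality established inside its proof, or by applying the integral bound on $[t_1,t_2]$ (legitimate, since the first part shows smallness is preserved, so the flow may be restarted at $t_1$) and letting $t_2\rightarrow t_1$; (ii) your divisions by $\vn{\SK}_2$ and by $\sqrt{L^3E}$ require these to be nonzero: if $E(t_0)=0$ the curve is an $\omega$-circle by Theorem \ref{TM1} and the flow is stationary from $t_0$ onward, so the bound is trivial there, while on the set where $E>0$ the coercivity estimate guarantees $\vn{\SK}_2>0$.
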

\begin{proof}
We estimate the evolution of length using first the curvature estimate (Lemma \ref{kest}):
\begin{align*}
(\log L)'
 &= -\frac{\vn{k_{ss}}_2^2}{L} + \frac{7}{2L}\vn{kk_s}_2^2
\\
 &\le C(\omega)L^{-3}E
\,.
\end{align*}
Then Proposition \ref{propcoolest} implies
\[
(\log L)' \le C(\omega)L^3\vn{\SK}_2^2
\]
which, after application of the estimate in Proposition \ref{proppres}, yields
\[
\log L[\gamma_t] \le \log L[\gamma_0] + C(\omega)(L^3E)[\gamma_0]\,,
\]
which implies the claimed a-priori estimate for length.
\end{proof}

With a uniform upper bound for length in hand, we are able to conclude exponential decay of the energy, and therefore the scale-invariant energy also.

\begin{cor}
\label{corexpdecayE}
Let $\gamma_0:\S\rightarrow\R^2$ be a smooth immersed curve satisfying
\[
(L^3E)[\gamma_0] < \varepsilon_2
\]
where $\varepsilon_2$ is as in Proposition \ref{proppres},
and $\gamma:\S\times[0,\infty)\rightarrow\R^2$ be the ideal curve flow with $\gamma_0$ as initial data.
Then there exists a universal constant $C_2$ (depending only on $\omega$ and the upper bound for length) such that
\[
	\int_\gamma \SK^2\,ds \ge C_2E
\,,
\]
and in particular
\[
E[\gamma_t] \le E[\gamma_0]e^{-C_2t}
\,.
\]
\end{cor}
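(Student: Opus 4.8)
The plan is to chain together the three preceding results---Propositions \ref{propcoolest}, \ref{proppres}, and \ref{lengthestimate}---and then feed the resulting coercivity estimate into the gradient-flow identity for $E$. The key point is that under the hypothesis $(L^3E)[\gamma_0] < \varepsilon_2$ the coercivity estimate of Proposition \ref{propcoolest} holds \emph{uniformly in time}, not merely at $t=0$, and simultaneously the length is uniformly bounded above.

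First I would observe that since $\varepsilon_2 \le \varepsilon_1 < \varepsilon_0$, Proposition \ref{proppres} guarantees $(L^3E)[\gamma_t] \le (L^3E)[\gamma_0] < \varepsilon_0$ for every $t \in [0,\infty)$. Thus the hypothesis of Proposition \ref{propcoolest} is satisfied along the entire flow, giving
\[
\int_\gamma \SK^2\,ds \ge \hat C_\omega L^{-6}E
\]
at each time. Next, Proposition \ref{lengthestimate} supplies a uniform upper bound $L[\gamma_t] \le L_0 := L[\gamma_0]\exp\big(C_1(L^3E)[\gamma_0]\big)$, so that $L^{-6} \ge L_0^{-6}$. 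Combining these two facts yields the first claimed inequality with the explicit constant $C_2 := \hat C_\omega L_0^{-6}$, which depends only on $\omega$ and the length bound $L_0$, as asserted.

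For the exponential decay I would recall the gradient-flow structure $\frac{d}{dt}E = -\int_\gamma \SK^2\,ds$ established in Section \ref{Splanar} (equivalently \eqref{gradflowstruct}). Substituting the lower bound just obtained gives the differential inequality
\[
\frac{d}{dt}E \le -C_2 E\,,
\]
and a direct integration (Gronwall) then produces $E[\gamma_t] \le E[\gamma_0]e^{-C_2 t}$, completing the proof.

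I do not anticipate any genuine obstacle here: the argument is essentially a bookkeeping exercise in verifying that the hypotheses of the earlier propositions propagate along the flow. The only points requiring a moment's care are confirming that the smallness condition \emph{persists}---so that Proposition \ref{propcoolest} may be invoked at every time rather than merely initially, which is exactly the preservation statement of Proposition \ref{proppres}---and that the uniform length bound is in hand, so as to convert the scale-dependent lower bound $\hat C_\omega L^{-6}E$ into a fixed constant multiple of $E$.
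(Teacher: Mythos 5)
Your proposal is correct and matches the paper's argument exactly: the paper's proof is just a compressed version of your chain, applying Proposition \ref{propcoolest} (valid along the flow by the preservation statement of Proposition \ref{proppres}) together with the uniform length bound of Proposition \ref{lengthestimate} to get $E' = -\int_\gamma \SK^2\,ds \le -C_2 E$, then integrating. The only difference is that you spell out the bookkeeping---persistence of the smallness condition and the conversion of $\hat C_\omega L^{-6}$ into a fixed constant---which the paper leaves implicit.
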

\begin{proof}
Proposition \ref{propcoolest} implies
\[
E'[\gamma_t] = -\int_\gamma \SK^2\,ds \le -C_2E[\gamma_t]
\]
from which the claim immediately follows.
\end{proof}

We may now use our estimates to bound length uniformly from below.

\begin{lem}
\label{lengthfrombelow}
Let $\gamma_0:\S\rightarrow\R^2$ be a smooth immersed curve satisfying
\[
(L^3E)[\gamma_0] < \varepsilon_2
\]
where $\varepsilon_2$ is as in Proposition \ref{proppres},
and $\gamma:\S\times[0,\infty)\rightarrow\R^2$ be the ideal curve flow with $\gamma_0$ as initial data.
Then
\[
L[\gamma_t] \ge L[\gamma_0]\exp\Big(-C_3\Big)\,,
\]
where $C_3$ is an absolute constant depending only on $\omega$, $E[\gamma_0]$ and $L[\gamma_0]$.
\end{lem}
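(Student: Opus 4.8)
The plan is to integrate the evolution of $\log L$, which from the proof of Proposition \ref{lengthestimate} reads
\[
(\log L)' = -\frac{\vn{k_{ss}}_2^2}{L} + \frac{7}{2L}\vn{kk_s}_2^2 \ge -\frac{\vn{k_{ss}}_2^2}{L}\,,
\]
so that a uniform bound on $\int_0^t \vn{k_{ss}}_2^2/L\,d\tau$ immediately yields the desired lower bound on $L$ after exponentiating. The whole difficulty is thus to dominate $\vn{k_{ss}}_2^2/L$ by a quantity whose time-integral is already known to be finite, namely $L^3\vn{\SK}_2^2$ (via Proposition \ref{proppres}).

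First I would upgrade the estimates in the proof of Proposition \ref{propcoolest} to the clean bound $\SP \le C(\omega)\vn{\SK}_2^2$, valid whenever $(L^3E)[\gamma] < \varepsilon_2$. This follows by returning to the inequality $\int_\gamma\SK^2\,ds \ge \frac{C_\omega}{4}\SP - (\text{l.o.t.})$ established there and observing that, using $L^3E \le \varepsilon_2$, every lower-order term is bounded by $C(\omega)L^{-6}E$; the coercivity estimate $\vn{\SK}_2^2 \ge \hat C_\omega L^{-6}E$ of Proposition \ref{propcoolest} then absorbs all of them into a small multiple of $\vn{\SK}_2^2$, and rearranging gives the claim.

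Next I would combine this with the Gagliardo--Nirenberg inequality already used in \eqref{estembedded}, namely $\vn{k_{ss}}_2^2 \le C(2E)^{2/3}\SP^{1/3}$, to obtain $\vn{k_{ss}}_2^2/L \le C(\omega)L^{-1}E^{2/3}\vn{\SK}_2^{2/3}$. The key dimensional step is then to rewrite the right-hand side as $L^3\vn{\SK}_2^2 \cdot \big(L^{-4}E^{2/3}\vn{\SK}_2^{-4/3}\big)$ and bound the bracketed factor by a constant, again using Proposition \ref{propcoolest} in the form $\vn{\SK}_2^{4/3} \ge (\hat C_\omega)^{2/3}L^{-4}E^{2/3}$. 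This yields the scale-consistent estimate
\[
\frac{\vn{k_{ss}}_2^2}{L} \le C(\omega)\,L^3\vn{\SK}_2^2\,.
\]

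Finally, integrating $(\log L)' \ge -C(\omega)L^3\vn{\SK}_2^2$ and invoking the integral bound $\int_0^t L^3\vn{\SK}_2^2\,d\tau \le C_0(L^3E)[\gamma_0]$ from Proposition \ref{proppres} gives $\log L[\gamma_t] \ge \log L[\gamma_0] - C(\omega)C_0(L^3E)[\gamma_0]$, which is the assertion with $C_3 = C(\omega)C_0(L^3E)[\gamma_0]$, depending only on $\omega$, $E[\gamma_0]$ and $L[\gamma_0]$. The main obstacle is the second and third steps: the quantity $\vn{k_{ss}}_2^2/L$ is not scale-invariant (it carries a factor $L^{-6}$ relative to the scale-invariant $L^5\vn{k_{ss}}_2^2$), so a naive interpolation leaves uncontrolled negative powers of $L$. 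The point is that the coercivity estimate $\vn{\SK}_2^2 \ge \hat C_\omega L^{-6}E$ supplies exactly the missing power of $L$ needed to rebalance dimensions and trade $\vn{k_{ss}}_2^2/L$ for the time-integrable quantity $L^3\vn{\SK}_2^2$.
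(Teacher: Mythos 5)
Your proof is correct, but it follows a genuinely different (and in fact more economical) route than the paper. The paper also starts from $(\log(1/L))' \le L^{-1}\vn{k_{ss}}_2^2$, but it then passes \emph{upward} to $\SP$ via the Poincar\'e inequality, $L^{-1}\vn{k_{ss}}_2^2 \le \frac{L^3}{16\omega^4\pi^4}\SP$, and controls $L^3\SP$ by $C(\omega)\big(L^3\vn{\SK}_2^2 + E + \text{lower-order terms in } E\big)$ \emph{without} absorbing the $E$-terms into $\vn{\SK}_2^2$; it then integrates in time, using Proposition \ref{proppres} for the $\vn{\SK}_2^2$-term and the exponential decay of $E$ (Corollary \ref{corexpdecayE}) for the remaining terms, so the paper's argument depends on the full chain Proposition \ref{lengthestimate} $\Rightarrow$ Corollary \ref{corexpdecayE}. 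You instead absorb every lower-order term pointwise in time, observing that under the preserved smallness $(L^3E)<\varepsilon_2$ each error term in the proof of Proposition \ref{propcoolest} is bounded by $C(\omega)L^{-6}E$, which the coercivity $\vn{\SK}_2^2 \ge \hat C_\omega L^{-6}E$ converts into a harmless multiple of $\vn{\SK}_2^2$; this yields the clean estimate $\SP \le C(\omega)\vn{\SK}_2^2$ and, after your dimensional rebalancing (again via the coercivity estimate), the pointwise bound $L^{-1}\vn{k_{ss}}_2^2 \le C(\omega)L^3\vn{\SK}_2^2$, so that only the integral estimate of Proposition \ref{proppres} is needed. This buys two things: your proof is independent of the exponential decay of $E$ and hence of the upper length bound, shortening the logical dependencies; and your constant $C_3 = C(\omega)C_0(L^3E)[\gamma_0] \le C(\omega)C_0\varepsilon_2$ depends only on $\omega$, which is sharper than the stated dependence on $E[\gamma_0]$ and $L[\gamma_0]$. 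One small remark: once you have $\SP \le C(\omega)\vn{\SK}_2^2$, your Gagliardo--Nirenberg step and the subsequent rebalancing can be shortcut entirely by the paper's own Poincar\'e step $L^{-1}\vn{k_{ss}}_2^2 \le \frac{L^3}{16\omega^4\pi^4}\SP \le C(\omega)L^3\vn{\SK}_2^2$, though your interpolation route is equally valid.
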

\begin{proof}%{{{
We calculate
\begin{equation}
\log \Big(\frac{1}{L}\Big)'
	= \frac{\vn{k_{ss}}_2^2}{L} - \frac7{2L}\vn{kk_s}_2^2
	\le L^{-1}\vn{k_{ss}}_2^2
	\,.
\label{eqlog1L}
\end{equation}
Now invoking estimate \eqref{estembedded} and its successors in the proof of Proposition \ref{propcoolest}, we find
\begin{equation*}
L^{-1}\int_\gamma k_{ss}^2\,ds
 \le \frac{L^3}{16\omega^4\pi^4}\SP
\end{equation*}
and
\begin{align*}
C_\omega\SP
 &\le  C(\omega)\bigg(
	\int_\gamma \SK^2\,ds + L^{-3}E + \int_\gamma (\SK-\SK_0)^2\,ds
	\bigg)
\\
 &\le  \frac{C_\omega}{2}\SP
	+ C(\omega)\bigg(
	\int_\gamma \SK^2\,ds + L^{-3}E + \Big[ ((L^3E)^\frac32 + \omega^3) (L^{-\frac32}E^\frac52) + ((L^3E)^2 + \omega^4) E^3\Big]
	\bigg)
\end{align*}
so that absorbing yields
\[
L^3\SP \le 
	C(\omega)\bigg(
	L^3\int_\gamma \SK^2\,ds + E + \Big[ ((L^3E)^\frac32 + \omega^3) (L^{\frac32}E^\frac52) + ((L^3E)^2 + \omega^4) L^3E^3\Big]
	\bigg)
\,.
\]
The estimate in Proposition \ref{proppres} and the exponential decay of $E$ (Corollary \ref{corexpdecayE}) implies then that
\[
	\int_0^\infty L^3\SP dt \le C(\omega,E[\gamma_0],L[\gamma_0])
	\,.
\]
We may then integrate \eqref{eqlog1L} to find
\[
	\log\Big(\frac1{L[\gamma_t]}\Big)
	\le 
	\log\Big(\frac1{L[\gamma_0]}\Big)
	+ C_3
\]
which implies the result.
\end{proof}%}}}

\begin{rmk}
Note that combined with the discussion around \eqref{standingass}, this
estimate shows that length is bounded uniformly away from zero along any ideal
curve flow.
\end{rmk}

This means we have a uniform length bound.
We now have global existence by Theorem \ref{TMglobal}.
In fact, since length is uniformly bounded from above and below, we have the
following uniform version of Theorem \ref{curvatureestimates}.

\begin{cor}
\label{uniformcurvatureestimates}
Let $\gamma_0:\S\rightarrow\R^2$ be a smooth immersed curve satisfying
\[
(L^3E)[\gamma_0] < \varepsilon_2
\]
where $\varepsilon_2$ is as in Proposition \ref{proppres},
and $\gamma:\S\times[0,\infty)\rightarrow\R^2$ be the ideal curve flow with $\gamma_0$ as initial data.
We have for all $l\in\N_0$,
\[
\vn{k_{s^l}}_\infty \le C(l)
\,,
\]
where $C(l)$ is a constant depending only on $l$ and $\gamma_0$.
\end{cor}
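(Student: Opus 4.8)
The plan is to re-run the energy argument from the proof of Theorem \ref{curvatureestimates}, observing that in that proof the \emph{only} source of $T$-dependence in the constants was the upper length bound of Lemma \ref{lemlength}, which grows like $e^{c_0t}$. Under the present hypothesis $(L^3E)[\gamma_0]<\varepsilon_2$ we now have genuine two-sided uniform control of length: by Proposition \ref{lengthestimate} and Lemma \ref{lengthfrombelow} there are constants $0<L_{\min}\le L[\gamma_t]\le L_{\max}<\infty$ depending only on $\gamma_0$. Consequently every length-dependent interpolation constant becomes time-independent, and the $T$-dependence of Theorem \ref{curvatureestimates} evaporates. The extra ingredient beyond this observation is a Poincar\'e-type absorption that turns the resulting differential inequality into one yielding a \emph{time-uniform} bound rather than the linear-in-$t$ growth that sufficed on bounded intervals.

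First I would record the uniform zeroth-order bound. Since $L^3E$ is non-increasing along the flow (Proposition \ref{proppres}) and $L$ is bounded below, the curvature estimate of Lemma \ref{kest} gives
\[
\|k\|_\infty \le \frac{1}{L_{\min}}\Big(\sqrt{2\varepsilon_2}+2\omega\pi\Big) =: \kappa_0\,,
\]
a uniform bound depending only on $\gamma_0$; in particular $\int_\gamma k^2\,ds\le \kappa_0^2 L_{\max}$ is uniformly bounded, which settles the case $l=0$ after the Sobolev step below. Next, exactly as in the proof of Theorem \ref{curvatureestimates}, I would compute for $l\ge 1$
\[
\frac{d}{dt}\int_\gamma k_{s^l}^2\,ds + 2\int_\gamma k_{s^{l+3}}^2\,ds = 2\int_\gamma k_{s^l}\big(P_3^{4+l}(k) + P_5^{2+l}(k)\big)\,ds
\]
and apply Proposition 2.5 of \cite{DKS}. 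Because the resulting constant depends only on the chosen absorption parameter, $\omega$, $\|k\|_\infty$ and the now-uniform length (and never on intermediate derivative norms, which the interpolation controls by powers of $\|k\|_2$ and the top term), we obtain the uniform counterpart of \eqref{theest}:
\[
\frac{d}{dt}\int_\gamma k_{s^l}^2\,ds + \int_\gamma k_{s^{l+3}}^2\,ds \le C_l\,,
\]
with $C_l$ time-independent and depending only on $l$ and $\gamma_0$.

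The step I expect to be the main obstacle is converting this inequality into a uniform bound. For $l\ge 1$ the function $k_{s^l}$ is an arc-length derivative of a periodic function, hence has zero mean, so the Wirtinger inequality on a curve of length at most $L_{\max}$ furnishes a uniform $\lambda>0$ (a fixed power of $2\pi/L_{\max}$) with $\int_\gamma k_{s^{l+3}}^2\,ds \ge \lambda\int_\gamma k_{s^l}^2\,ds$. Substituting yields
\[
\frac{d}{dt}\int_\gamma k_{s^l}^2\,ds + \lambda\int_\gamma k_{s^l}^2\,ds \le C_l\,,
\]
and the elementary ODE comparison for $y'+\lambda y\le C_l$ gives $\int_\gamma k_{s^l}^2\,ds \le \max\big\{\int_{\gamma_0} k_{s^l}^2\,ds,\, C_l/\lambda\big\}$, a bound depending only on $l$ and $\gamma_0$.

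Finally I would upgrade these uniform $L^2$ bounds on all derivatives to the claimed $L^\infty$ bounds. The one-dimensional Sobolev embedding on a curve of uniformly bounded length gives $\|k_{s^l}\|_\infty^2 \le C\big(\int_\gamma k_{s^l}^2\,ds + \int_\gamma k_{s^{l+1}}^2\,ds\big)$ with $C$ depending only on $L_{\max}$; since both terms on the right are uniformly bounded for every order, the conclusion $\|k_{s^l}\|_\infty \le C(l)$ follows for all $l\in\mathbb{N}_0$. The crux throughout is simply that the uniform two-sided length control removes the time-dependence of all constants, while the zero-mean structure of $k_{s^l}$ supplies the spectral gap needed to trade the highest-order dissipation term for an absorption term in $\int_\gamma k_{s^l}^2\,ds$ itself.
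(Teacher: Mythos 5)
Your proposal is correct and follows essentially the same route as the paper's proof: the paper likewise isolates the $T$-dependence in Theorem \ref{curvatureestimates} as coming solely from the exponential length bound, replaces it with the uniform two-sided length control of Proposition \ref{lengthestimate} and Lemma \ref{lengthfrombelow} to get a time-independent constant in \eqref{theest}, then applies a Poincar\'e absorption and the same threshold-crossing ODE comparison before upgrading to $L^\infty$ via the one-dimensional Sobolev embedding. The only cosmetic differences are that the paper handles $l=0$ directly from Lemma \ref{kest} without any Sobolev step, and uses the zero-mean bound $\vn{k_{s^l}}_\infty^2\le L\vn{k_{s^{l+1}}}_2^2$ in place of your two-term embedding.
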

\begin{proof}%{{{
First, the estimate for $l=0$ follows from Lemma \ref{kest} and the uniform bound for $L$ from below (Proposition \ref{lengthfrombelow}).

So, let us assume $l>0$.
In the proof of Theorem \ref{curvatureestimates}, we used Proposition 2.5 from Dziuk-Kuwert-Sch\"atzle \cite{DKS}.
The time dependence here was a result of using the exponential-in-time estimate for length, Lemma \ref{lemlength}.
In that proof we also used the standing assumption that length was uniformly bounded from below.
This means we have the estimate \eqref{theest} for all $t\in[0,\infty)$ with a
uniform constant $C$ on the right depending only on $\omega$, $E[\gamma_0]$ and
$L[\gamma_0]$.
That is,
\begin{align*}
\frac{d}{dt}\int_\gamma k_{s^l}^2\,ds
+ \int_\gamma k_{s^{l+3}}^2\,ds
\le C_l\,.
\end{align*}
This implies (with the Poincar\'e inequality)
\begin{equation}
\label{ohanest}
\frac{d}{dt}\int_\gamma k_{s^l}^2\,ds
\le C_l
- \bigg(\frac{L^2}{4\omega^2\pi^2}\bigg)^3\int_\gamma k_{s^{l}}^2\,ds
\,.
\end{equation}
We assume that $C_l>\vn{k_{s^l}}_2^2|_{t=0}$ (if not, replace it by this constant).
Estimate \eqref{ohanest} implies
\begin{equation}
\label{ananest}
\int_\gamma k_{s^l}^2\,ds
 \le 
 \bigg(\frac{4\omega^2\pi^2}{L^2}\bigg)^3C_l
\,.
\end{equation}
To see this, note that \eqref{ananest} is initially true, and if
$\vn{k_{s^l}}_2^2$ grew to ever attain the value on the right hand side of
\eqref{ananest} at $t=t_0$, the estimate \eqref{ohanest} implies that
$(\vn{k_{s^l}}_2^2)'(t_0) \le 0$.
Therefore $\vn{k_{s^l}}_2^2$ can never exceed the value on the right hand side of \eqref{ananest}.

Finally, from \eqref{ananest} we see that
\[
\vn{k_{s^l}}^2_\infty \le L\int_\gamma k_{s^{l+1}}^2\,ds \le C(l,\omega,E[\gamma_0],L[\gamma_0])
\]
as required.
\end{proof}%}}}

Global existence and the uniform estimates on length imply that $L(t)$
converges along a subsequence of times $\{t_j\}$, $t_j\rightarrow\infty$.
We can use any of the uniform $L^1$-in-time functions identified earlier to obtain
convergence along possibly a further subsequence to a standard round
$\omega$-circle parametrised by $\gamma_\infty$.
Here $\gamma_\infty$ is not just the limit as $t\rightarrow\infty$ but includes
possible one-off composition with a tangential diffeomorphism.

\begin{rmk}
Although we work toward full convergence, one should be careful to claim
uniqueness of the parametrisation $\gamma_\infty$.
In geometric problems, this is typically false.
For example, taking initial data for the flow to be $\gamma_0(\theta) =
\gamma_\infty(\theta+\frac\pi2)$ (a rotation of the circle $\gamma_\infty$, whatever it
might be) will produce a stationary flow that remains a fixed distance
from $\gamma_\infty$ in all $C^k$-norms.

This is why it is necessary to include at least implicitly a one-time
reparametrisation to obtain full convergence. In our statement, we do this by
saying that we have full convergence for each flow with given initial data to an $\omega$-circle, but
do not pick out a specific parametrisation for this $\omega$-circle.
From a larger perspective, an interesting open question is how to determine in
general properties of the limit from the initial data, for example the centre
of the limit or its radius.
\end{rmk}

Our approach now is to prove that we have convergence of every derivative of the parametrisation by directly integrating and differentiating the evolution equation.
We begin by using exponential decay of $E$ plus uniform estimates for the curvature to obtain, by interpolation, exponential decay of all derivatives of curvature.

\begin{cor}
Let $\gamma_0:\S\rightarrow\R^2$ be a smooth immersed curve satisfying
\[
(L^3E)[\gamma_0] < \varepsilon_2
\]
where $\varepsilon_2$ is as in Proposition \ref{proppres},
and $\gamma:\S\times[0,\infty)\rightarrow\R^2$ be the ideal curve flow with $\gamma_0$ as initial data.
Then for all $l\in\N$,
\[
\vn{k_{s^l}}_\infty \le C(l)e^{-\frac{C_2}{2}t}
\,,
\]
where $C(l)$ is a constant depending only on $l$, $\omega$, $E[\gamma_0]$, $L[\gamma_0]$, and $C_2$ is as in Corollary \ref{corexpdecayE}.
\label{cordecayk}
\end{cor}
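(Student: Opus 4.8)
The plan is to promote the uniform bounds of Corollary~\ref{uniformcurvatureestimates} to exponential decay by interpolating against the exponential decay of the energy supplied by Corollary~\ref{corexpdecayE}. First I would reduce the $L^\infty$ statement to an $L^2$ one: since $\int_\gamma k_{s^l}\,ds = 0$ for every $l\ge 1$ and length is uniformly controlled above and below (Proposition~\ref{lengthestimate} and Lemma~\ref{lengthfrombelow}), the Agmon inequality on the circle gives $\vn{k_{s^l}}_\infty^2 \le C\vn{k_{s^l}}_2\vn{k_{s^{l+1}}}_2$. Thus it suffices to obtain exponential decay of each $L^2$ norm $\vn{k_{s^j}}_2$, $j\ge 1$. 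The base quantity already decays: $\vn{k_s}_2^2 = 2E[\gamma_t] \le 2E[\gamma_0]e^{-C_2 t}$ by Corollary~\ref{corexpdecayE}.

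For the intermediate derivatives I would use the Gagliardo--Nirenberg interpolation inequality on the circle with base function $u = k_s$, writing $k_{s^l} = \partial_s^{l-1}u$ and interpolating between the decaying norm $\vn{u}_2 = \vn{k_s}_2$ and a high-order norm $\vn{\partial_s^{N}u}_2 = \vn{k_{s^{N+1}}}_2$ that is uniformly bounded by Corollary~\ref{uniformcurvatureestimates}. This produces an estimate of the form $\vn{k_{s^l}}_\infty \le C\,\vn{k_{s^{N+1}}}_2^{\theta}\vn{k_s}_2^{1-\theta}$ with $\theta = \theta(l,N)\to 0$ as $N\to\infty$. Since the high-order factor is a constant $C(N)$ and $\vn{k_s}_2 \le Ce^{-\frac{C_2}{2}t}$, each such choice yields $\vn{k_{s^l}}_\infty \le C(l,N)e^{-\frac{C_2}{2}(1-\theta)t}$, i.e.\ exponential decay of every derivative of curvature at a rate as close to $\frac{C_2}{2}$ as we wish by taking $N$ large. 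For the purpose of the $C^\infty$-convergence that follows, this already suffices.

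The \textbf{main obstacle} is recovering the \emph{sharp} rate $\frac{C_2}{2}$ claimed in the statement, which plain interpolation cannot reach (its rate is always strictly below $\frac{C_2}{2}$). To attack this I would work at the $L^2$ level through the evolution identity from Theorem~\ref{curvatureestimates},
\[
\tfrac{d}{dt}\vn{k_{s^l}}_2^2 = -2\vn{k_{s^{l+3}}}_2^2 + 2\int_\gamma k_{s^l}\big(P_3^{4+l}(k)+P_5^{2+l}(k)\big)\,ds\,,
\]
and show, using that every monomial in $P_3^{4+l}$ and $P_5^{2+l}$ carries at least one arclength derivative, that after integration by parts and the Dziuk--Kuwert--Sch\"atzle interpolation (Proposition~2.5 of \cite{DKS}) the remainder is controlled by $\vn{k_{s^{l+3}}}_2^2$ together with terms retaining a positive power of the decaying quantity $\vn{k_s}_2$, hence is $\le \vn{k_{s^{l+3}}}_2^2 + Ce^{-C_2 t}$; an integrating-factor/barrier argument of the type used for \eqref{ananest} would then close the induction. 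The delicate point is that the dissipation rate one extracts from $\vn{k_{s^{l+3}}}_2^2 \ge \lambda\vn{k_{s^l}}_2^2$ via the first Poincar\'e eigenvalue $\lambda = (2\pi/L)^6$ can be \emph{smaller} than $C_2$, so a crude comparison degrades the rate. What rescues the exact rate is that the slowly decaying Fourier modes of $k$ that govern $C_2$ are exactly the modes dominating every $\vn{k_{s^l}}_2$, so that along the flow each such mode decays at rate at least $C_2$; making this coordination between the static spectral gap and the flow's actual dissipation precise is where the real work lies.
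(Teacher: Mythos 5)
Your first two paragraphs are, in substance, the paper's proof. The paper argues in two lines: since $k_{s^l}$ has zero average and length is bounded above and below, $\vn{k_{s^l}}_\infty^2 \le CL^2\vn{k_{s^{l+1}}}_2^2$, and then a single interpolation $\vn{k_{s^{l+1}}}_2^2 \le \vn{k_s}_2\,\vn{k_{s^{2l+1}}}_2$ combined with the uniform bounds of Corollary \ref{uniformcurvatureestimates} gives $\vn{k_{s^l}}_\infty^2 \le C(l)\sqrt{E}$, after which Corollary \ref{corexpdecayE} finishes. Your Agmon-plus-Gagliardo--Nirenberg variant with a large auxiliary order $N$ is the same mechanism, and in fact yields a \emph{better} exponent than the paper's own chain: $\vn{k_{s^l}}_\infty^2 \le C(l)\sqrt{E}$ gives, literally, $\vn{k_{s^l}}_\infty \le C(l)e^{-\frac{C_2}{4}t}$, i.e.\ rate $C_2/4$, whereas your $N\to\infty$ device achieves any rate strictly below $C_2/2$.

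This exposes the one misjudgement in your proposal: the ``main obstacle'' of your third paragraph is not an obstacle that the paper overcomes or that the result requires. The constant in the exponent is immaterial both in the statement (where the $\frac{C_2}{2}$ could be harmlessly replaced by any fixed positive rate) and in every subsequent use --- Proposition \ref{propallderivs} and Theorem \ref{TMmain} only ever integrate an exponential in time, for which any positive rate suffices. Indeed the paper's own proof does not attain the stated rate $\frac{C_2}{2}$ either, so the spectral-coordination scheme you sketch (tracking the slow Fourier modes through the evolution identity and the Dziuk--Kuwert--Sch\"atzle interpolation) is unnecessary machinery, and you rightly concede it is left incomplete. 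Had the sharp constant genuinely been needed, that incompleteness would be a gap; as the corollary is actually used, your first two paragraphs already constitute a complete and correct proof along the paper's route.
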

\begin{proof}
We estimate
\[
\vn{k_{s^l}}_\infty^2 \le CL^2\vn{k_{s^{l+1}}}_2^2
 \le CL^2\bigg(
		\int_\gamma k_s^2\,ds
	\bigg)^\frac12
	\bigg(
		\int_\gamma k_{s^{2l+1}}^2\,ds
	\bigg)^\frac12
\]
Now the uniform curvature estimates (Corollary \ref{uniformcurvatureestimates}) and uniform length estimates (Proposition \ref{lengthestimate} and Lemma \ref{lengthfrombelow}) imply
\[
\vn{k_{s^l}}_\infty^2 \le C(l)\sqrt{E}
\]
from which the result follows.
\end{proof}

We can use the control above to obtain uniform bounds for all derivatives of the evolving family $\gamma$.

\begin{prop}
Let $\gamma_0:\S\rightarrow\R^2$ be a smooth immersed curve satisfying
\[
(L^3E)[\gamma_0] < \varepsilon_2
\]
where $\varepsilon_2$ is as in Proposition \ref{proppres},
and $\gamma:\S\times[0,\infty)\rightarrow\R^2$ be the ideal curve flow with $\gamma_0$ as initial data.
Then for all $l\in\N_0$,
\[
\vn{\partial_{u^l}\gamma}_\infty \le C'(l) + \sum_{p=0}^l\vn{\partial_{s^l}\gamma_0}_\infty
\,,
\]
where $C'(l)$ is a constant depending only on $l$, $\omega$, $E[\gamma_0]$, $L[\gamma_0]$, and $C_2$ is as in Corollary \ref{corexpdecayE}.
\label{propallderivs}
\end{prop}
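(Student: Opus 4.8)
The goal is to bound all $u$-derivatives of the parametrisation $\gamma$ uniformly in time, where $u$ is the fixed initial parameter. The plan is to integrate the evolution equation $\partial_t\gamma = \SK\nu$ in time and use the exponential decay of curvature derivatives (Corollary \ref{cordecayk}) together with the uniform bounds on the arc-length element and its derivatives established in the proof of Theorem \ref{TMtinf}.

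First I would recall the conversion formula \eqref{convertform}, which expresses $\partial_{u^l}\phi$ for any function $\phi$ as $v^l\partial_{s^l}\phi$ plus a polynomial in $v,\partial_uv,\ldots,\partial_u^{l-1}v$ and $\phi,\partial_s\phi,\ldots,\partial_s^{l-1}\phi$, where $v = |\partial_u\gamma|$. Applying this to $\phi = \SK\nu$ (or its components), and using that $\SK = k_{s^4} + P_3^2(k)$, I see that every $u$-derivative of the velocity $\partial_t\gamma = \SK\nu$ is expressible through arc-length derivatives of curvature multiplied by powers of $v$ and its $u$-derivatives. The exponential decay from Corollary \ref{cordecayk} gives $\vn{\partial_{s^m}\SK}_\infty \le C(m)e^{-\frac{C_2}{2}t}$, so each such term decays exponentially, \emph{provided} $v$ and its $u$-derivatives stay uniformly bounded.

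The control of $v$ and its $u$-derivatives is the step I would carry out next, and it reuses the ODE argument from Theorem \ref{TMtinf} verbatim. Since $\partial_t v = -k\SK v$, and $\vn{k}_\infty$ is uniformly bounded (Corollary \ref{uniformcurvatureestimates}) while $\vn{\SK}_\infty \to 0$ exponentially, one has $v$ bounded above and below with limits; differentiating the ODE in $u$ and using $\vn{\partial_{u^l}(k\SK)}_\infty \le C(l)e^{-\frac{C_2}{2}t}$ (again via \eqref{convertform} and Corollary \ref{cordecayk}) shows $\psi_l = \partial_{u^l}v$ satisfies a linear ODE with exponentially decaying forcing and uniformly bounded coefficient, whence $\vn{\psi_l}_\infty \le C(l)$ uniformly in time. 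With these bounds in hand, $\vn{\partial_{u^l}(\partial_t\gamma)}_\infty = \vn{\partial_t\partial_{u^l}\gamma}_\infty \le C(l)e^{-\frac{C_2}{2}t}$.

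Finally I would integrate in time: since $\partial_{u^l}\gamma(u,t) = \partial_{u^l}\gamma_0(u) + \int_0^t \partial_t\partial_{u^l}\gamma(u,\tau)\,d\tau$, and the integrand is bounded in sup-norm by an exponentially decaying, hence $L^1([0,\infty))$, function, the integral is bounded uniformly by $\frac{2}{C_2}C(l)$. This yields
\[
\vn{\partial_{u^l}\gamma}_\infty \le C'(l) + \vn{\partial_{u^l}\gamma_0}_\infty \le C'(l) + \sum_{p=0}^l\vn{\partial_{s^p}\gamma_0}_\infty
\,,
\]
where in the last step one converts the initial $u$-derivative back to arc-length derivatives of $\gamma_0$ using \eqref{convertform} once more. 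The main obstacle is purely bookkeeping: ensuring that the polynomial $P^l$ in \eqref{convertform}, when applied to the velocity, produces only terms that are products of uniformly-bounded quantities ($v$, its derivatives, lower-order $k_{s^m}$) with at least one exponentially-decaying factor, so that the whole expression decays and is integrable. No single term requires a new estimate; all the ingredients (exponential decay of $\vn{k_{s^l}}_\infty$, uniform two-sided bounds on $L$, uniform bounds on $\psi_l$) are already available.
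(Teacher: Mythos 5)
Your proof is correct, but it takes a genuinely different route from the paper's. The paper stays at the level of arc-length derivatives: it proves by induction, using the commutator $[\partial_t,\partial_s]=k\SK\partial_s$, the structural identity \eqref{bestest}
\[
\partial_t\partial_{s^l}\gamma = (\nu+\tau)\sum_{p=0}^l \Big( P^{4+p}_{1+l-p}(k) + P^{2+p}_{3+l-p}(k)\Big)\,,
\]
integrates this in time using Corollary \ref{cordecayk} to bound $\vn{\partial_{s^l}\gamma}_\infty$, and only at the very end converts to $u$-derivatives via \eqref{convertform}. You instead work in the fixed parameter $u$ from the start, where $\partial_t$ and $\partial_{u^l}$ commute exactly, so no commutator bookkeeping or $P$-notation induction is needed; the price is that you must control $v=|\partial_u\gamma|$ and $\psi_l=\partial_{u^l}v$ \emph{uniformly in time}, which you correctly obtain by upgrading the ODE argument of Theorem \ref{TMtinf}: since $k\SK$ and all its derivatives now decay exponentially (each term of $k\SK$ contains at least one factor $k_{s^m}$, $m\ge 1$, so Corollary \ref{cordecayk} applies, with $\vn{k}_\infty$ bounded by Corollary \ref{uniformcurvatureestimates}), the linear ODE for $\psi_l$ has integrable coefficient and integrable forcing, giving time-independent bounds. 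This upgrade is a real contribution: the bounds on $v$ and $\psi_l$ in Theorem \ref{TMtinf} depend on the length of the time interval, and the paper's final application of \eqref{convertform} in Proposition \ref{propallderivs} implicitly needs exactly the uniform version you prove. So your argument is in one respect more complete than the paper's, while the paper's identity \eqref{bestest} is self-contained and yields exponential decay of $\partial_t\partial_{s^l}\gamma$ without any reference to the parametrisation. One cosmetic point: your final bound $\sum_{p=0}^l\vn{\partial_{s^p}\gamma_0}_\infty$ (summing over $p$) is the sensible reading of the statement's $\sum_{p=0}^l\vn{\partial_{s^l}\gamma_0}_\infty$, and the constant absorbed in converting $\partial_{u^l}\gamma_0$ to arc-length derivatives of $\gamma_0$ depends on the initial parametrisation (trivial if $\gamma_0$ is parametrised by arc length) — the paper's proof glosses over the same point, so this is not a gap relative to the paper's standard of rigour.
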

\begin{proof}
We claim that for $l\in\N_0$,
\begin{equation}
\label{bestest}
\partial_t\partial_{s^l}\gamma = (\nu+\tau)\sum_{p=0}^l \bigg( P^{4+p}_{1+l-p}(k) + P^{2+p}_{3+l-p}(k)\bigg)
\,.
\end{equation}
We prove this by induction.
First, note that $\SK = P_1^4(k) + P_3^2(k)$ so the equation above holds for $l=0$.
For the inductive step, we differentiate to find
\begin{align*}
\partial_t\partial_{s^{l+1}}\gamma 
	&=
	[\partial_t,\partial_s]\partial_{s^l}\gamma + \partial_s(\partial_t\partial_{s^l}\gamma)
\\
	&= k\SK\partial_{s^{l-1}}\tau
	+ \partial_s\bigg[(\nu+\tau)\sum_{p=0}^l \bigg( P^{4+p}_{1+l-p}(k) + P^{2+p}_{3+l-p}(k)\bigg)\bigg]
\\
	&= k\SK \bigg(\nu\sum_{p+q=l-1} P_p^q(k) + \tau \sum_{p+q=l-2} kP_p^q(k)\bigg)
\\&\qquad
	+ (\nu+\tau)\sum_{p=0}^l \bigg( P^{4+p}_{2+l-p}(k) + P^{2+p}_{4+l-p}(k)\bigg)
\\&\qquad
	+ (\nu+\tau)\sum_{p=0}^l \bigg( P^{4+p+1}_{1+l-p}(k) + P^{2+p+1}_{3+l-p}(k)\bigg)
\\
	&= \nu\sum_{p+q=l-1} \bigg(P_{p+2}^{q+4}(k) + P_{p+4}^{q+2}(k)\bigg) + \tau \sum_{p+q=l-2} \bigg(P_{p+3}^{q+4}(k) + P_{p+5}^{q+2}(k)\bigg)
\\&\qquad
	+ (\nu+\tau)\sum_{p=0}^{l+1} \bigg( P^{4+p}_{1+l-p}(k) + P^{2+p}_{3+l-p}(k)\bigg)
\\
	&= (\nu+\tau)\sum_{p=0}^{l+1} \bigg( P^{4+p}_{1+l-p}(k) + P^{2+p}_{3+l-p}(k)\bigg)
\end{align*}
as required.

Integrating \eqref{bestest} and using Corollary \ref{cordecayk}, we find
\[
\vn{\partial_{s^l}\gamma}_\infty \le \vn{\partial_{s^l}\gamma_0}_\infty + C(l)\int_0^t e^{-\frac{C_2}{2}t'}\,dt'
                                 \le \vn{\partial_{s^l}\gamma_0}_\infty + C'(l)
\,.
\]
Converting arc-length derivatives back to the given derivatives on the parametrisation by \eqref{convertform}, we find
\[
\vn{\partial_{u^l}\gamma}_\infty \le C''(l) + \sum_{p=0}^l\vn{\partial_{s^l}\gamma_0}_\infty\,,
\]
as required.
\end{proof}

We may now deduce full convergence. The full result is as follows.

\begin{thm}
\label{TMmain}
Let $\gamma_0:\S\rightarrow\R^2$ be a smooth immersed curve satisfying
\[
(L^3E)[\gamma_0] < \varepsilon_2
\]
where $\varepsilon_2$ is as in Proposition \ref{proppres},
and $\gamma:\S\times[0,\infty)\rightarrow\R^2$ be the ideal curve flow with $\gamma_0$ as initial data.
Then $\gamma$ converges exponentially fast in the $C^\infty$-topology to a standard round $\omega$-circle.
\end{thm}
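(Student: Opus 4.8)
The plan is to upgrade the subsequential convergence already noted before the statement into full, exponentially fast $C^\infty$ convergence, by integrating the evolution equation directly in time in the fixed parametrisation $u\in\S$. All the needed ingredients are now in place: uniform upper and lower bounds on length (Proposition \ref{lengthestimate} and Lemma \ref{lengthfrombelow}), exponential decay of the energy (Corollary \ref{corexpdecayE}), exponential decay of every arc-length derivative of curvature (Corollary \ref{cordecayk}), uniform bounds on every derivative of curvature (Corollary \ref{uniformcurvatureestimates}) and uniform bounds on every derivative $\vn{\partial_{u^l}\gamma}_\infty$ of the immersion (Proposition \ref{propallderivs}). The remaining task is to assemble these into a convergence statement with a rate.

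First I would establish that, for each $l\in\N_0$, the time derivative $\partial_t\partial_{u^l}\gamma$ decays exponentially in $L^\infty(\S)$. Since $u$ and $t$ are independent coordinates, $\partial_t\partial_{u^l}\gamma=\partial_{u^l}(\SK\nu)$. I would convert these $u$-derivatives into arc-length derivatives via \eqref{convertform}, noting that the conversion factors $v,\partial_u v,\ldots,\partial_u^{l-1}v$ are now bounded uniformly in time (and $v$ is bounded below), because $\partial_t\log v=-k\SK$ is exponentially decaying and integrable in time by the uniform curvature bounds and the exponential decay of $\SK$. It then suffices to bound $\vn{\partial_{s^j}(\SK\nu)}_\infty$. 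Writing $\SK=k_{s^4}+P_3^2(k)$ and expanding with $\nu_s=-k\tau$, $\tau_s=k\nu$, every monomial of $\partial_{s^j}(\SK\nu)$ factors as a differentiated curvature term carrying at least one arc-length derivative of $k$, times a bounded polynomial in $k$ and its derivatives along $\tau,\nu$. By Corollary \ref{cordecayk} the first factor decays like $e^{-\frac{C_2}{2}t}$, while the remaining factors are bounded by Corollary \ref{uniformcurvatureestimates}, so
\[
\vn{\partial_t\partial_{u^l}\gamma}_\infty=\vn{\partial_{u^l}(\SK\nu)}_\infty\le C(l)e^{-\frac{C_2}{2}t}\,.
\]
Equivalently, one can read this decay directly off the structural formula \eqref{bestest}, each term of which carries at least one differentiated factor of $k$.

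Second, I would integrate in time. Since $\partial_t$ here is the partial derivative at fixed $u$, for any $t_1<t_2$ we have
\[
\vn{\partial_{u^l}\gamma(\cdot,t_2)-\partial_{u^l}\gamma(\cdot,t_1)}_\infty\le\int_{t_1}^{t_2}\vn{\partial_t\partial_{u^l}\gamma}_\infty\,dt'\le\frac{2C(l)}{C_2}e^{-\frac{C_2}{2}t_1}\,,
\]
so $\{\partial_{u^l}\gamma(\cdot,t)\}$ is uniformly Cauchy as $t\to\infty$ and converges uniformly. As this holds for every $l\in\N_0$, the immersions $\gamma(\cdot,t)$ converge in $C^\infty(\S)$, with exponential rate $e^{-\frac{C_2}{2}t}$, to a smooth limit curve $\gamma_\infty$. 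Because the flow is purely normal there is no tangential drift, so this is genuine convergence of $\gamma(\cdot,t)$ and no family of reparametrisations is required.

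Finally I would identify the limit. The decay $\vn{k_s}_\infty\to 0$ forces $k_s\equiv 0$ on $\gamma_\infty$, i.e.\ constant curvature; equivalently $\SK\nu\to 0$ shows $\gamma_\infty$ is a smooth closed ideal curve, so Theorem \ref{TM1} identifies it as a standard round $\omega$-circle, with $\omega=\frac1{2\pi}\int_{\gamma_0}k_0\,ds_0$ the winding number, which is preserved along the smooth flow. I expect the only genuinely delicate point to be the uniform-in-time (rather than merely locally-in-time) control of the conversion factors $\partial_u^j v$ in the first step; this is exactly what the uniform length and curvature bounds of this section buy us, and is the feature distinguishing the present argument from the finite-time estimates of Theorem \ref{TMtinf}.
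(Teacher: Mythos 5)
Your proposal is correct, and it reaches the conclusion by a more direct route than the paper. The paper also derives the structural formula \eqref{bestest} and integrates it, but only to obtain \emph{uniform boundedness} of $\partial_{u^l}\gamma$ (Proposition \ref{propallderivs}); the actual uniqueness of the limit is then delegated to the abstract appendix result, Theorem \ref{TMconvergence}, whose hypotheses (uniform bounds, $L^1$-in-time $L^2$ velocity via Corollary \ref{cordecayk}, and subconvergence extracted by compactness to an $\omega$-circle) are verified one by one. You instead integrate the exponentially decaying velocity between two times $t_1<t_2$ to show $\partial_{u^l}\gamma(\cdot,t)$ is uniformly Cauchy for every $l$, which yields full $C^\infty$ convergence with an explicit rate $e^{-\frac{C_2}{2}t}$ in one stroke, making both the appendix theorem and the subsequential-limit/contradiction mechanism unnecessary; you then identify the limit intrinsically ($k_s\to 0$, or $\SK\to 0$ plus Theorem \ref{TM1}) rather than importing it from the subconvergence hypothesis. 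What your route buys is self-containedness and a cleaner derivation of the exponential rate in every $C^l$ norm (the paper's citation of Theorem \ref{TMconvergence} by itself only gives convergence, with the rate implicit in the same integration you perform); what it costs is exactly the point you flag: uniform-in-time control of the conversion factors $v,\partial_u v,\dots,\partial_u^{j-1}v$ in \eqref{convertform}, which the proof of Theorem \ref{TMtinf} only carried out on bounded time intervals. Your sketch of this step is sound but should be run as a genuine induction on $j$: to bound $\psi_j=\partial_u^j v$ via $\partial_t\psi_j=-k\SK\,\psi_j-\sum_{i<j}\binom{j}{i}\partial_u^{j-i}(k\SK)\,\psi_i$ one needs $\partial_u^m(k\SK)$ converted through \eqref{convertform}, which uses only $\partial_u^{i'}v$ with $i'\le j-1$, so the induction closes, with Gr\"onwall and the exponential decay of $\partial_{s^m}(k\SK)$ (every monomial carries a differentiated factor of $k$, so Corollaries \ref{uniformcurvatureestimates} and \ref{cordecayk} apply) giving the uniform bound. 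One further small point worth making explicit: since $v$ is bounded below uniformly, the limit $\gamma_\infty$ is an immersion and its curvature is the limit of the curvatures, and the winding number $\omega$ is preserved since the flow is a smooth homotopy through immersions, so the constant-curvature limit is indeed the standard round $\omega$-circle.
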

\begin{proof}
We use Theorem \ref{TMconvergence} to conclude full convergence of the flow.

We take $(N^n,h) = (\R^2,g^{\R^2})$, $M^m = \S^1$, $f=\gamma$ and $F=\SK$.
Uniform boundedness of $\gamma$ and all its derivatives (Proposition \ref{propallderivs}) implies the first hypothesis of Theorem \ref{TMconvergence} is satisfied.
For the second, we note that (using Corollary \ref{cordecayk} and Proposition \ref{lengthestimate})
\[
\int_\gamma \SK^2\,ds \le Ce^{-C_2t}\,.
\]
This implies
\[
\int_0^T \bigg(\int_\gamma \SK^2\,ds\bigg)^\frac12\,dt \le C\int_0^T e^{-\frac{C_2}{2}t}\,dt \le \hat C
\]
where $\hat C$ is a constant depending only on $\omega$, $E[\gamma_0]$ and $L[\gamma_0]$.

Finally, we consider the third hypothesis.
Uniform boundedness of all derivatives of $\gamma$ (Proposition \ref{propallderivs}) yields that for any sequence $t_j\rightarrow\infty$, the $C^\infty$-norm of $\gamma(t_j,\cdot)$ is uniformly bounded.
We have exponential decay of the energy, and so $E[\gamma(t_j,\cdot)]\rightarrow 0$, which implies that a subsequence $\gamma(t_{j_k},\cdot)$ converges to an $\omega$-circle in the $C^\infty$-topology.
Of course, $\omega$-circles are smooth, and so the third hypothesis is satisfied.

Therefore we apply Theorem \ref{TMconvergence} to conclude full convergence of the flow.
\end{proof}

\begin{rmk}
We note that this theorem implies the following geometric inequality.
Clearly, perturbations of any $\omega$-circle satisfy
$(L^3E)[\gamma] < \varepsilon_2$.
Theorem \ref{TMmain} implies that if $\gamma:\S\rightarrow\R^2$ is in the regular homotopy class of a lemniscate, then
\[
(L^3E)[\gamma] \ge \varepsilon_2
\,.
\]
If this were not the case, then Theorem \ref{TMmain} would imply that the curve $\gamma$ is
diffeomorphic to an $\omega$-circle, which is impossible.
\end{rmk}

%}}}

\appendix

\section{A convergence result}

In this part of the appendix we briefly prove that bounded flows whose velocity decays in a certain weak sense have unique limits.

\begin{thm}
\label{TMconvergence}
Let $(N^n,h)$ be an $n$-dimensional Riemannian manifold and $M^m$ be an $m$-dimensional manifold with $n>m$.
Suppose $f:M^m\times[0,\infty)\rightarrow N^n$ is a one-parameter family of smooth isometric immersions satisfying
\begin{align*}
\partial_tf = F\,.
\end{align*}
Suppose furthermore that
\begin{itemize}
\item (Uniform bounds) We have the estimates
\[
\int_M |f|^2\,d\mu \le c_1\quad\text{and}\quad
\int_M |H|^2\,|f|^4\,d\mu \le c_2
\]
for time-independent constants $c_1$ and $c_2$.
\item ($L^1$-$L^2$ Velocity) The $L^2$-norm of the velocity is uniformly $L^1$ in time, that is,
\[
\int_0^T \bigg(\int_M |F|^2\,d\mu\bigg)^\frac12\,dt \le c_3
\]
for a constant $c_3$ that does not depend on $T$.
\item (Subconvergence) there exists a smooth immersion $f_\infty:M^m\rightarrow N^n$ and a sequence $\{t_j\}\subset[0,\infty)$, $t_j\rightarrow\infty$, such that $f(\cdot,t_j)\stackbin{C^\infty}{\longrightarrow}f_\infty$.
\end{itemize}
Then $f$ converges to $f_\infty$.
\end{thm}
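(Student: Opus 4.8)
The plan is to promote the given subconvergence to full convergence by monitoring a single geometric quantity whose time derivative turns out to be integrable in time. The natural candidate is the squared $L^2$-distance to the limit,
\[
A(t) := \int_M \absolute{f(\cdot,t)-f_\infty}^2\,d\mu\,,
\]
where $d\mu$ is the (time-dependent) measure induced by $f(\cdot,t)$. The subconvergence hypothesis forces $A(t_j)\to 0$: uniform convergence $f(\cdot,t_j)\to f_\infty$ on the compact $M$ together with convergence of the induced measures $d\mu_{t_j}$ makes $\int_M\absolute{f(\cdot,t_j)-f_\infty}^2\,d\mu_{t_j}\to 0$. It therefore suffices to show that $\lim_{t\to\infty}A(t)$ \emph{exists}, for then the limit must be $0$ and we obtain $f(\cdot,t)\to f_\infty$ in $L^2$. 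To produce the limit I will show $A'\in L^1([0,\infty))$.

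Differentiating, and using that (after the usual reduction to a purely normal velocity, as in the footnote of Section \ref{Splanar}) the measure evolves by $\partial_t\,d\mu = -\IP{H}{F}\,d\mu$, I compute
\[
A'(t) = 2\int_M \IP{F}{f-f_\infty}\,d\mu - \int_M \absolute{f-f_\infty}^2\IP{H}{F}\,d\mu\,.
\]
The first term is controlled directly by Cauchy--Schwarz: it is at most $2\vn{F}_2\,A(t)^{1/2}$. The second term is the heart of the matter, and is exactly where the weighted hypothesis is engineered to be used: by Cauchy--Schwarz
\[
\absolute{\int_M \absolute{f-f_\infty}^2\IP{H}{F}\,d\mu}
\le \Big(\int_M \absolute{f-f_\infty}^4\absolute{H}^2\,d\mu\Big)^{1/2}\vn{F}_2\,,
\]
and expanding $\absolute{f-f_\infty}^4\le 8\absolute{f}^4 + 8\vn{f_\infty}_\infty^4$ the principal piece $\int_M\absolute{H}^2\absolute{f}^4\,d\mu\le c_2$ is precisely the second uniform bound (the $f_\infty$-correction being lower order and absorbed by the available curvature control). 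Hence $A'(t)\le \vn{F}_2\,(C + 2A(t)^{1/2})$. A routine integration of this differential inequality, using the $L^1$-$L^2$ velocity bound $\int_0^\infty\vn{F}_2\,dt\le c_3$, first bounds $A$ on $[0,\infty)$ and then yields $A'\in L^1([0,\infty))$, so that $\lim_{t\to\infty}A(t)$ exists.

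Combining this with $A(t_j)\to 0$ gives $A(t)\to 0$, that is, $f(\cdot,t)\to f_\infty$ in $L^2$. To promote this to convergence in $C^\infty$ I will interpolate against the uniform bounds on all derivatives of $f$ available in our setting (Proposition \ref{propallderivs}): $L^2$-smallness together with uniform $C^{k+1}$-bounds controls every $C^k$-norm of $f(\cdot,t)-f_\infty$, and since the limit is the fixed smooth immersion $f_\infty$ this upgrades to full $C^\infty$ convergence. The main obstacle throughout is the time dependence of the measure $d\mu$: it is what prevents a naive application of Minkowski's inequality to $f(t)-f(s)=\int_s^t F\,d\tau$ in a fixed norm, and it is exactly the measure-evolution term $-\int_M\absolute{f-f_\infty}^2\IP{H}{F}\,d\mu$ that the weighted curvature bound $\int_M\absolute{H}^2\absolute{f}^4\,d\mu\le c_2$ is tailored to absorb.
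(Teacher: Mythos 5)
Your proposal is correct and follows essentially the same route as the paper's own proof: the identical functional $\int_M|f-f_\infty|^2\,d\mu$, the same derivative estimate $\big|\tfrac{d}{dt}\int_M|f-f_\infty|^2\,d\mu\big|\le c\,\vn{F}_2$ obtained from Cauchy--Schwarz, the measure evolution $\partial_t\,d\mu=-\IP{H}{F}\,d\mu$, and the weighted hypothesis $\int_M|H|^2|f|^4\,d\mu\le c_2$, all combined with $\int_0^\infty\vn{F}_2\,dt\le c_3$. The only deviation is the endgame --- the paper rules out a second subsequential limit $\tilde f\ne f_\infty$ by contradiction, whereas you argue directly that $A'\in L^1$ forces $\lim_{t\to\infty}A(t)$ to exist and hence to equal $0$ --- a logically equivalent finish, and your closing $C^\infty$-interpolation via Proposition \ref{propallderivs} imports uniform derivative bounds from the specific flow exactly as the paper does when it invokes this theorem in the proof of Theorem \ref{TMmain}.
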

\begin{proof}
Suppose there exists a sequence $\{s_j\}\subset[0,\infty)$, $s_j\rightarrow\infty$, such that $f(\cdot,s_j)\stackbin{C^\infty}{\longrightarrow}\tilde{f}\ne f_\infty$.
Consider the functional
\[
\BG[f] = \int_M |f-f_\infty|^2\,d\mu\,.
\]
Since $\tilde f$ and $f_\infty$ are smooth, it follows that 
\begin{equation}
\label{eqcontra}
\lim_{j\rightarrow\infty}\BG[f(\cdot,s_j)] \ne 0\,.
\end{equation}
We compute
\begin{align*}
\bigg|\frac{d}{dt}\BG\bigg|
	&\le \bigg|
		2\int_M |F|\,|f-f_\infty|\,\big(1 + |H|\,|f-f_\infty|\big)\,d\mu
		\bigg|
\\
	&\le \vn{F}_2\bigg[\int_M|f-f_\infty|^2\,\big(1 + |H|\,|f-f_\infty|\big)^2\,d\mu\bigg]^\frac12
\\
	&\le  c\vn{F}_2\bigg[
		\int_M|f-f_\infty|^2 + |H|^2\,|f-f_\infty|^4\,d\mu
		\bigg]^\frac12
\\
	&\le  c\vn{F}_2\bigg[
		\vn{f}_2^2 + \vn{f_\infty}_2^2 + \vn{|H|\,|f|^2}_2^2 + \vn{|H|\,|f_\infty|^2}_2^2
		\bigg]^\frac12
\\
	&\le c\vn{F}_2
\,,
\end{align*}
by hypothesis.
This is in contradiction with \eqref{eqcontra}, since then
\[
|\BG[f(\cdot,s_j)] - \BG[f(\cdot,t_j)]| \le c\int_{\min\{s_j,t_j\}}^\infty \vn{F}_2\,dt \longrightarrow 0\,.
\]
Therefore there can not exist such a sequence $\{s_j\}$, and the theorem is proved.
\end{proof}

\begin{bibdiv}
\begin{biblist}

\bib{CharlieThesis}{book}{
	title={The mean curvature flow of submanifolds of high codimension},
	author={Baker, Charles},
	year={2011},
	publisher={PhD Thesis, Australian National University, arXiv:1104.4409}
}

\bib{Chill}{article}{
  title={On the {\L}ojasiewicz--Simon gradient inequality},
  author={Chill, Ralph},
  journal={Journal of Functional Analysis},
  volume={201},
  number={2},
  pages={572--601},
  year={2003},
  publisher={Elsevier}
}

\bib{DPS}{article}{
  author={Dall'Acqua, Anna},
  author={Pozzi, Paola},
  author={Spener, Adrian},
  title={The {\L}ojasiewicz--Simon gradient inequality for open elastic curves},
  journal={Journal of Differential Equations},
  volume={261},
  number={3},
  pages={2168--2209},
  year={2016},
  publisher={Elsevier}
}

\bib{DKS}{article}{
  title={Evolution of Elastic Curves in $\R^n$: Existence and Computation},
  author={Dziuk, Gerhard},
  author={Kuwert, Ernst},
  author={Sch\"atzle, Reiner},
  journal={SIAM journal on Mathematical Analysis},
  volume={33},
  number={5},
  pages={1228--1245},
  year={2002},
  publisher={SIAM}
}

\bib{application}{article}{
  title={3D Euler spirals for 3D curve completion},
  author={Harary, Gur},
  author={Tal, Ayellet},
  booktitle={Proceedings of the twenty-sixth annual symposium on Computational geometry},
  pages={393--402},
  year={2010},
  organization={ACM}
}
\bib{huisken}{article}{
  title={Flow by mean-curvature of convex surfaces into spheres},
  author={Huisken, Gerhard},
  journal={Journal of Differential Geometry},
  volume={20},
  number={1},
  pages={237--266},
  year={1984}
}

\bib{huiskenvpmcf}{article}{
  title={The volume preserving mean curvature flow},
  author={Huisken, Gerhard},
  journal={J. reine angew. Math},
  volume={382},
  number={35-48},
  pages={78},
  year={1987}
}

\bib{mccoyapmcf}{article}{
  title={The surface area preserving mean curvature flow},
  author={McCoy, James},
  journal={Asian Journal of Mathematics},
  volume={7},
  number={1},
  pages={7--30},
  year={2003},
  publisher={International Press of Boston}
}

\bib{para1}{article}{
  title={A sixth order flow of plane curves with boundary conditions},
  author={McCoy, James},
  author={Wheeler, Glen},
  author={Wu, Yuhan},
  journal={arXiv preprint arXiv:1710.09546},
  year={2017}
}

\bib{para2}{article}{
  author={McCoy, James},
  author={Wheeler, Glen},
  author={Wu, Yuhan},
  title={A sixth order curvature flow of plane curves with boundary conditions},
  journal={MATRIX Annals},
  year={2018 (online)}
}

\bib{WP}{article}{
  title={The polyharmonic heat flow of closed plane curves},
  author={Parkins, Scott},
  author={Wheeler, Glen},
  journal={Journal of Mathematical Analysis and Applications},
  volume={439},
  number={2},
  pages={608--633},
  year={2016},
  publisher={Elsevier}
}

\bib{wh6}{article}{
  title={Surface diffusion flow near spheres},
  author={Wheeler, Glen},
  journal={Calculus of Variations and Partial Differential Equations},
  volume={44},
  number={1},
  pages={131--151},
  year={2012},
  publisher={Springer Berlin/Heidelberg}
}

\bib{Wcdf}{article}{
    author={Wheeler, Glen},
    title={On the curve diffusion flow of closed plane curves},
    journal={Annali di Matematica Pura ed Applicata},
    date={2013},
    volume={192},
    pages={931--950},
    }

\end{biblist}
\end{bibdiv}

\end{document}